\numberwithin{equation}{subsection} 
\theoremstyle{plain}
\newtheorem{theorem}[equation]{Theorem}
\newtheorem{thm}[equation]{Theorem}
\newtheorem{lemma}[equation]{Lemma}
\newtheorem{prop}[equation]{Proposition}
\newtheorem{corollary}[equation]{Corollary}
\newtheorem{corr}[equation]{Corollary}
\newtheorem{alg}[equation]{Algorithm}
\theoremstyle{definition} 
\newtheorem{defn}[equation]{Definition}
\newtheorem{algorithm}[equation]{Algorithm}
\theoremstyle{remark}
\newtheorem{remark}[equation]{Remark}
\newtheorem{example}[equation]{Example}
\newenvironment{enumalph}
{\begin{enumerate}

}
{\end{enumerate}}
\newenvironment{enumalg}
{\begin{enumerate}}
{\end{enumerate}}
\DeclareMathOperator{\Gal}{Gal}
\newcommand{\Qbar}{\mathbb{Q}^{\textup{al}}}
\newcommand{\Q}{\QQ}
\newcommand\PP{\mathbb{P}}
\newcommand\QQ{\mathbb{Q}}
\newcommand\C{\mathbb{C}}
\newcommand\Z{\mathbb{Z}}
\newcommand{\PoneC}{\mathbb{P}^1_\C}
\newcommand{\calH}{{\mathcal H}}
\newcommand{\defi}[1]{\textsf{#1}} 	
\tikzstyle{blackdot}=[fill=black, draw=black, shape=circle]
\tikzstyle{new style 0}=[fill={rgb,255: red,74; green,74; blue,74}, draw={rgb,255: red,85; green,85; blue,85}, shape=circle]
\tikzstyle{light grey dot}=[fill={rgb,255: red,185; green,185; blue,185}, draw={rgb,255: red,194; green,194; blue,194}, shape=circle]
\tikzstyle{Red Dot}=[fill=red, draw=red, shape=circle]
\tikzstyle{Blue Dot}=[fill=blue, draw=blue, shape=circle]
\tikzstyle{green dot}=[fill={rgb,255: red,28; green,195; blue,16}, draw={rgb,255: red,90; green,162; blue,68}, shape=circle]
\tikzstyle{new style 1}=[fill={rgb,255: red,128; green,0; blue,128}, draw={rgb,255: red,128; green,0; blue,128}, shape=circle]
\tikzstyle{new style 2}=[fill=cyan, draw=cyan, shape=circle]
\tikzstyle{new style 3}=[fill={rgb,255: red,255; green,128; blue,0}, draw={rgb,255: red,255; green,128; blue,0}, shape=circle]
\tikzstyle{new style 4}=[fill=green, draw=green, shape=circle]
\tikzstyle{new style 5}=[fill={rgb,255: red,128; green,128; blue,0}, draw={rgb,255: red,128; green,128; blue,0}, shape=circle]
\tikzstyle{new style 6}=[fill={rgb,255: red,66; green,154; blue,69}, draw={rgb,255: red,66; green,154; blue,69}, shape=circle]
\tikzstyle{open dot}=[fill=white, draw=black, shape=circle]
\tikzstyle{new edge style 0}=[-, draw={rgb,255: red,255; green,128; blue,0}]
\tikzstyle{black}=[-]
\tikzstyle{R6}=[-, draw=blue]
\tikzstyle{R4}=[-, draw=cyan]
\tikzstyle{R3}=[-, draw=green]
\tikzstyle{R2}=[-, draw={rgb,255: red,255; green,128; blue,0}]
\tikzstyle{c4}=[-, draw={rgb,255: red,128; green,128; blue,0}]
\tikzstyle{c3}=[-, draw={rgb,255: red,5; green,121; blue,9}]
\tikzstyle{R1}=[-, draw=red]
\tikzstyle{c6}=[-, draw={rgb,255: red,128; green,0; blue,128}]
\tikzstyle{new edge style 1}=[->]
\tikzstyle{blwitharrowotherway}=[<-]
\tikzstyle{new edge style 2}=[-, draw={rgb,255: red,207; green,207; blue,207}]
\tikzstyle{lightblack}=[-, draw={rgb,255: red,50; green,50; blue,50}]
\title{Computing Euclidean Belyi maps}
\author{Matthew Radosevich}
\address{Department of Mathematics,
  Dartmouth College, 6188 Kemeny Hall, Hanover, NH 03755, USA}
\email{matt.j.radosevich@gmail.com}
\author{John Voight}
\address{Department of Mathematics,
  Dartmouth College, 6188 Kemeny Hall, Hanover, NH 03755, USA}
\email{jvoight@gmail.com}
\subjclass[2010]{11G32, 11Y40}
\date{\today}
\begin{document}

\begin{abstract}
We exhibit an explicit algorithm to compute three-point branched covers of the complex projective line when the uniformizing triangle group is Euclidean.  
\end{abstract}
 
\maketitle

\setcounter{tocdepth}{1}

\section{Introduction}

\subsection{Motivation}

Grothendieck in his \emph{Esquisse d'un Programme} \cite{Grothendieck} described an action of the absolute Galois group $\Gal(\Qbar\,|\,\Q)$ of the rational numbers on the sets of Belyi maps and dessins d'enfants, linking combinatorics, topology, geometry, and arithmetic in a deep and surprising way.  Computational aspects of this program \cite{SijslingVoight} remain of significant interest, and there has been recent, fundamental progress using complex analytic techniques \cite{KMSV,Monien,Co3,BarthWenz}.  A common thread underlying these approaches is to realize a Belyi map via uniformization as $\varphi \colon \Gamma \backslash \calH \to \Delta \backslash \calH$ where $\calH$ is one of the three classical geometries (the sphere, the Euclidean plane, or the hyperbolic plane), and $\Gamma \leq \Delta$ is a finite-index subgroup of a triangle group.  The case where $\calH$ is spherical is truly classical, corresponding to certain triangulations of the Platonic solids.  In the hyperbolic case, analytic methods can be employed to convert this geometric description into an algebraic one, using  modular forms, finite element techniques, or conformal maps.  What remains is the case of Euclidean triangle groups, those arising from the familiar regular triangular tessellations of the Euclidean plane.  In this paper, we fill this gap: we compute Euclidean Belyi maps explicitly from maps of complex tori, forming a bridge between the classical and the general.

\subsection{Main result}

A \defi{Belyi map} over $\C$ is a morphism $\varphi \colon X \to \PP^1_\C$ of nice (projective, nonsingular, integral) curves over $\C$ that is unramified away from $\{0,1,\infty\}$.  By the Riemann existence theorem, we may equivalently work with such a map of compact Riemann surfaces.  Famously, Belyi \cite{Belyi,Belyi2} proved that a curve $X$ over $\C$ can be defined over the algebraic numbers $\Qbar$ if and only if $X$ admits a Belyi map.  

Belyi maps admit a tidy combinatorial description, something we take as the input to our algorithm.  A \defi{permutation triple} of degree $d$ is a triple $(\sigma_0,\sigma_1,\sigma_\infty) \in S_d^3$ of permutations on $d$ elements such that $\sigma_\infty \sigma_1 \sigma_0 = 1$.  A permutation triple is \defi{transitive} if it generates a transitive subgroup of $S_d$. 
The monodromy around $0,1,\infty$ of a Belyi map of degree $d$ gives a permutation triple of degree $d$, giving a bijection between isomorphism classes of Belyi maps of degree $d$ and transitive permutation triples up to simultaneous conjugation.  Lifting paths, one can compute (by numerical approximation) the permutation triple attached to a Belyi map; in this paper, we consider the harder, converse computational task.

Let $\sigma$ be a transitive permutation triple of degree $d$ and let $a,b,c$ be the orders of $\sigma_0,\sigma_1,\sigma_\infty$, respectively.  By the theory of covering spaces, the permutation triple $\sigma$ defines a homomorphism $\pi \colon \Delta(a,b,c) \to S_d$ and thereby a subgroup $\Gamma \leq \Delta(a,b,c)$ of index $d$ (see section \ref{sec:background}).  The quotient $\Gamma \backslash \calH$ can be given the natural structure of a Riemann surface $X(\Gamma)$, and the further quotient to $\Delta \backslash \calH$ defines a Belyi map $\varphi \colon X(\Gamma) \to X(\Delta) \simeq \PP^1_\C$.  By the theorem of Belyi, the map $\varphi$ can be defined over the field of algebraic numbers $\Qbar$.  

We say that $\sigma$ (and its corresponding map $\varphi$) is \defi{Euclidean} if $1/a+1/b+1/c=1$, in which case the attached triangle group $\Delta(a,b,c)$ is a group of symmetries of the Euclidean plane, whence $(a,b,c)=(3,3,3),(2,3,6),(2,4,4)$.  Our main result provides an algorithmic way to compute algebraic equations for $\varphi$ given $\sigma$.

\begin{thm} \label{thm:mainthm}
There exists an explicit algorithm that, given as input a transitive, Euclidean permutation triple $\sigma$, produces as output a model for the Belyi map $\varphi$ associated to $\sigma$ over $\Qbar$.  
\end{thm}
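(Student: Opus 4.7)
The plan is to exploit the defining feature of the Euclidean case: the universal cover is $\C$ itself, and the translation subgroup $T \triangleleft \Delta := \Delta(a,b,c)$ is a rank-two lattice. Thus the entire computation reduces to explicit manipulations with complex tori, their isogenies, and their quotients by finite cyclic groups of automorphisms.

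First, for each of the three Euclidean signatures $(3,3,3)$, $(2,3,6)$, $(2,4,4)$, I would record a standard realization of $\Delta$ as a subgroup of $\mathrm{Isom}^+(\C)$, together with the short exact sequence
\[
1 \to T \to \Delta \to \Delta/T \to 1,
\]
where $T$ is a hexagonal or square lattice and $\Delta/T$ is cyclic of order $3$, $6$, or $4$. The quotient $E_\Delta := \C/T$ is then a CM elliptic curve over $\QQ(\zeta_3)$ or $\QQ(i)$, and $X(\Delta) = E_\Delta/(\Delta/T) \simeq \PP^1_\C$. I would fix, once and for all, an explicit isomorphism $X(\Delta) \xrightarrow{\sim} \PP^1$ sending the three elliptic orbits to $0$, $1$, $\infty$, expressed as a rational function in the Weierstrass $\wp$-function of $T$.

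Given the input triple $\sigma$, the next step is to translate the combinatorial data into lattice data. The homomorphism $\pi \colon \Delta \to S_d$ determined by $\sigma$ yields $\Gamma = \pi^{-1}(\mathrm{Stab}(1))$; from this, I would compute $\Lambda := \Gamma \cap T$ as an explicit finite-index sublattice of $T$, together with the embedding $\Gamma/\Lambda \hookrightarrow \Delta/T$. Because $T$ is normal in $\Delta$, $\Lambda$ is normal in $\Gamma$, so the curve $X(\Gamma)$ is realized as the quotient of the CM elliptic curve $E := \C/\Lambda$ by the finite cyclic action of $\Gamma/\Lambda$. The Belyi map is then recovered from the isogeny $E \to E_\Delta$ induced by $\Lambda \hookrightarrow T$: the composition $E \to E_\Delta \to X(\Delta)$ is $\Gamma/\Lambda$-invariant and descends to $\varphi \colon X(\Gamma) \to X(\Delta)$. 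Expressing these maps in coordinates via $\wp_\Lambda$ and $\wp_T$, together with standard isogeny and quotient formulas for CM elliptic curves, yields algebraic equations for $\varphi$ over $\Qbar$.

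The main obstacle I anticipate is bookkeeping rather than analysis: the combinatorial input $\sigma$ determines $\Gamma$ only up to conjugacy in $\Delta$, so one must identify a sublattice $\Lambda \leq T$ (equivalently, an ideal in $\ZZ[\zeta_3]$ or $\ZZ[i]$) whose coset geometry reproduces $\sigma$, and then pin down the correct model of $\varphi$ among its twists. The CM elliptic curves themselves are defined over $\QQ(\zeta_3)$ or $\QQ(i)$, but a non-Galois $\Gamma \leq \Delta$ may force $\varphi$ to descend only over a larger number field cut out by the Galois orbit of the dessin. I would handle this by enumerating the finite-index sublattices $\Lambda \leq T$ of the prescribed index up to the action of $\Delta$, matching the one whose induced permutation representation on $T/\Lambda$-cosets realizes $\sigma$, and then reading off explicit algebraic equations for $\varphi$ from the classical theory of complex multiplication and isogenies of elliptic curves.
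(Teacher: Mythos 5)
Your overall strategy is exactly the paper's: split $\Gamma$ into its translation lattice $\Lambda = \Gamma \cap T$ and a finite cyclic rotation part, realize $E = \C/\Lambda \to \C/T = E_\Delta$ as an isogeny (computed by V\'elu-type formulas), and descend the composite $E \to E_\Delta \to X(\Delta)$ along the quotient $E \to X(\Gamma)$. One detail is an unnecessary detour: you propose to ``enumerate the finite-index sublattices $\Lambda \leq T$ \ldots\ matching the one whose induced permutation representation realizes $\sigma$,'' but no enumeration or matching is needed. The lattice is directly computable from $\sigma$: an element $\omega_1^{a_1}\omega_2^{a_2} \in T$ lies in $\Gamma$ if and only if $1^{\pi(\omega_1)^{a_1}\pi(\omega_2)^{a_2}} = 1$, so one reads off a basis from the cycles of $\pi(\omega_1),\pi(\omega_2)$ through $1$ and a Hermite normal form computation (Algorithm \ref{alg: basisForTG}). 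Likewise the worry about twists is moot, since the complex uniformization pins down the model and the torsion $x$-coordinates defining the kernel are recognized exactly using division polynomials.

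The genuine gap is in the final claim that ``standard \ldots\ quotient formulas for CM elliptic curves'' produce the map $E \to X(\Gamma)$. A generator of $\Gamma/\Lambda$ is a rotation $z \mapsto \zeta z + \beta$ about some vertex $v_O$ of the tessellation, and $v_O$ need not lie in the $\Lambda$-orbit of the chosen origin $v_c = 0$: it may lie over $v_a$ or $v_b$. In that case the induced map on $E = \C/\Lambda$ is an automorphism of the curve that does \emph{not} fix the identity, so it is not an automorphism of the elliptic curve, and the standard quotient formulas (e.g.\ $(x,y) \mapsto x^2$ or $y^2$) do not apply as written. The paper deals with this in two steps: a preprocessing conjugation of $\sigma$ so that one of $v_a,v_b,v_c$ is a vertex of maximal rotation (Corollary \ref{cor:maxtgamm}, Algorithm \ref{alg:preproc}), and then a translation of both $E(\Gamma)$ and $E(\Delta)$ by the image of $v_O$ so that the rotation becomes an honest elliptic-curve automorphism, at the cost of replacing the fixed map $\alpha$ by an explicitly computed translate $\alpha'$ (Lemmas \ref{lemma:autoOnEGp} and \ref{lem:alphap346}). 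Without this adjustment your algorithm would fail on any input whose only vertices of maximal rotation lie over $v_a$ or $v_b$; with it, your argument coincides with the paper's.
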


The algorithm in Theorem \ref{thm:mainthm} is specified in Algorithm \ref{alg:thisisit}.  We implemented the algorithm in the computer algebra system \textsf{Magma} \cite{Magma}: the running time is quite favorable.  We computed a database of Euclidean Belyi maps with this implementation (see section \ref{sec: Examples}) which we will upload to the LMFDB \cite{LMFDB}.  Our code is available as part of a Belyi maps package available online (\url{https://github.com/michaelmusty/Belyi}).

\begin{remark}
It would be interesting to estimate the running time of our algorithm by estimating the heights of intermediate computations and the precision required in Step 4 of Algorithm \ref{alg: isogeny}.
\end{remark}

\subsection{Proof sketch}

We now briefly indicate the idea behind the proof of Theorem \ref{thm:mainthm}.  We first convert the permutation triple $\sigma$ into an explicit description of the group $\Gamma \leq \Delta$.  Next, we write $\Gamma \simeq T(\Gamma) \rtimes R(\Gamma)$  as a semi-direct product, where $T(\Gamma)$ consists of the subgroup of translations in $\Gamma$ and $R(\Gamma)$ is generated by rotation around a particular point, which we can find explicitly.  The quotients $E(\Gamma) \colonequals T(\Gamma) \backslash \C$ and $E(\Delta) \colonequals  T(\Delta) \backslash \C$ define elliptic curves.  We then have the following commutative diagram, which we call the \emph{master diagram}:
\begin{equation}  \label{eqn:EXDeltaGamma}
\begin{gathered}
\xymatrix@R=1ex@C=6ex{
E(\Gamma) \ar[rd]^{\beta} \ar[dd]_{\psi} & \\
& X(\Gamma) \ar[dd]^{\varphi} \\
E(\Delta) \ar[rd]_(.4){\alpha} \\
& X(\Delta) \simeq \PP^1
} 
\end{gathered}
\end{equation}

To find the Belyi map $\varphi$, our strategy is to compute the other three maps in our diagram, filling in $\varphi$ by commutativity (``descending $\psi$ along $\alpha$'').  The bottom map $\alpha$ depends only on $a,b,c$ and the choice of origin, giving six possibilities.  The map $\psi$ is an isogeny of elliptic curves, which we compute from the inclusion of lattices implied by $T(\Gamma) \leq T(\Delta)$ by applying formulas of V\'elu. The top map $\beta$ is computed by looking at the fixed field of $\C(E(\Gamma))$ under the finite subgroup of automorphisms corresponding to the rotations $R(\Gamma)$ (taking care to ensure these rotations act by automorphisms at the origin).  The final step, to fill in $\varphi$ to make the diagram commute, is obtained via explicit substitution.  

\subsection{Contents}

After reviewing background in section \ref{sec:background}, we exhibit in section \ref{sec:mainresult} the main algorithm (Algorithm \ref{alg:thisisit}) in pseudocode and then prove the main result (Theorem \ref{thm:mainthm}).  In section \ref{sec: Examples} we describe an implementation in the \textsf{Magma} computer algebra system and then present some computed examples.

\subsection{Acknowledgements}

The authors would like to thank Sam Schiavone and Jeroen Sijsling for discussions.  Voight was supported by a Simons Collaboration grant (550029).

\section{Group theory and geometry} \label{sec:background}

In this section, we begin by developing some preliminary input coming from group theory and geometry.  

\subsection{Transitive permutation representations}
\label{sec:TPRs}

First, a few basic facts and conventions.  In this article, the symmetric group $S_d$ acts on the right on $\{1,\dots,d\}$, written in exponentiated form: e.g., if $\tau = (1\,2\,3)$ and $\mu = (2\,3)$ then $1^{\tau\mu} = (1^\tau)^{\mu} = 2^{\mu} = 3$.

Recall that if $G$ is a group, a \defi{(finite) permutation representation} of $G$ is a group homomorphism $\pi \colon G \to S_d$ for some $d \geq 1$, and we say that $\pi$ is \defi{transitive} if its image is a transitive subgroup of $S_d$.  A transitive permutation triple $\sigma$ defines a transitive permutation representation by $\pi(\delta_s)=\sigma_s$ for $s=a,b,c$, and conversely.  

Let $\pi \colon \Delta \to S_d$ be a transitive permutation representation.  Let
\begin{equation}
\Gamma  \colonequals \{\delta \in \Delta : 1^{\pi(\delta)}=1\}.
\end{equation}
be the preimage of the stabilizer of $1$ under $\pi$.  (The stabilizer of $k \in \{1,\dots,d\}$ is conjugate to $\Gamma$ in $\Delta$.)  Then $[\Delta:\Gamma]=d$.  Conversely, given $\Gamma \leq \Delta$ of index $d$, the action of $\Delta$ on the cosets of $\Gamma$ gives a transitive permutation representation $\pi \colon \Delta \to S_d$, and this correspondence is bijective.  

\subsection{Euclidean triangle groups} \label{sec:TriGroups}

We refer to Magnus \cite[\S II.4]{Magnus} for classical background on Euclidean triangle groups; we briefly summarize some classical facts.  Let $T^*$ be a triangle in the Euclidean plane $\C$ with angles $\pi/a$, $\pi/b$, and $\pi/c$ at the vertices $v_a$, $v_b$, and $v_c$ labeled clockwise, with $a,b,c \in \Z_{\geq 2}$.  Then in fact there are only three possibilities, namely 
\[ (a,b,c)=(3,3,3),(2,3,6),(2,4,4)\] 
corresponding to the solutions to $1/a+1/b+1/c=1$; the corresponding tessellations of the Euclidean plane by triangles are sketched in Figure \ref{fig:tesseuc}, with alternating triangles colored white and black.  

\begin{equation} \label{fig:tesseuc}\addtocounter{equation}{1} \notag
\begin{gathered}
\includegraphics[scale = .25]{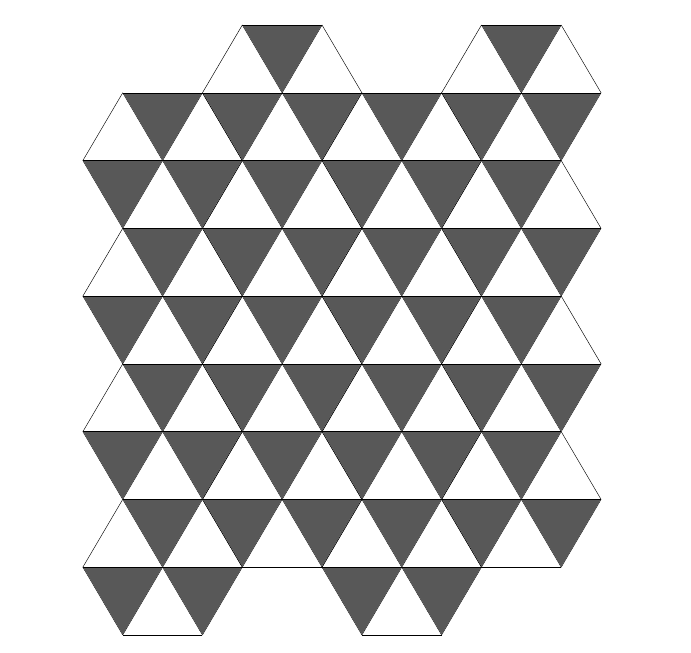}
\includegraphics[scale = .25]{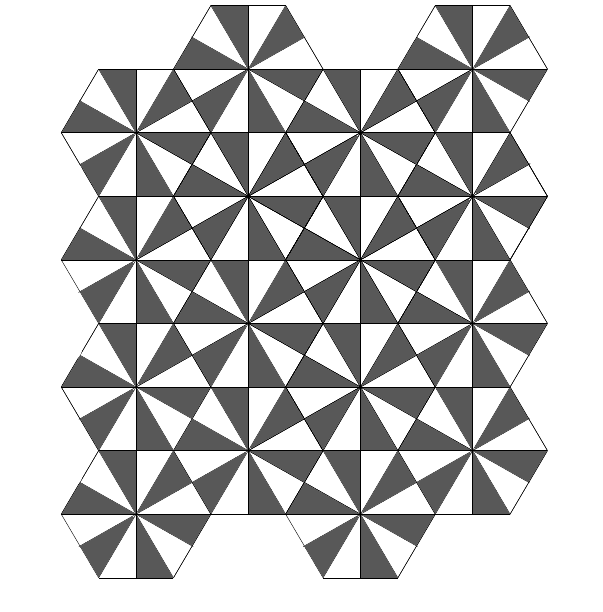}
\includegraphics[scale = .25]{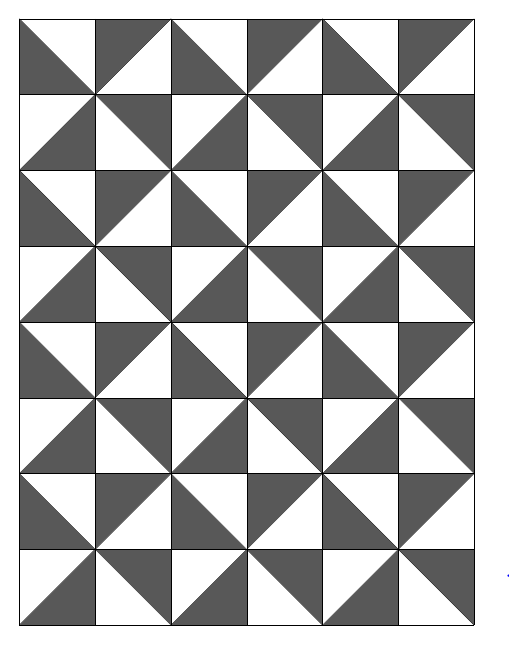}
 \\[4pt]
\text{Figure \ref{fig:tesseuc}: Tessellations for $\Delta(3,3,3)$, $\Delta(2,3,6)$, and $\Delta(2,4,4)$}
\end{gathered}
\end{equation}

The group generated by the reflections in the sides of $T^*$ generates a discrete group of isometries acting properly on $\C$, with fundamental domain $T^*$.  The further subgroup of orientation-preserving isometries $\Delta(a,b,c)$ has index $2$, described as follows.  For $s \in \{a,b,c\}$, let $\delta_s$ be the counterclockwise rotation about $v_s$ by an angle of $2\pi/s$.  

\begin{prop}  \label{prop:TDNormal}
The following statements hold.
\begin{enumalph}
\item There is a presentation
\[ \Delta = \Delta(a,b,c) \simeq \langle \delta_a, \delta_b, \delta_c\, |\, \delta_a^a = \delta_b^b = \delta_c^c = \delta_c\delta_b\delta_a = 1 \rangle. \]
\item There is a unique group homomorphism 
\begin{equation}
\rho \colon \Delta \to \tfrac{1}{c}\Z/\Z \xrightarrow{\sim} \Z/c\Z 
\end{equation}
such that 
\[ \delta_a,\delta_b,\delta_c \mapsto 1/a,1/b,1/c \mapsto c/a,c/b,1. \]
\item We have $\ker \rho = T(\Delta)$ where $T(\Delta) \trianglelefteq \Delta$ is the subgroup of translations, giving a split exact sequence
\begin{equation} \label{eqn:exactseq!} 
1 \to T(\Delta) \to \Delta \xrightarrow{\rho} \langle \Z/c\Z \rangle \to 1;
\end{equation}
in particular,
\[ \Delta=T(\Delta)\langle \delta_c \rangle \simeq \Z^2 \rtimes \Z/c\Z. \]
\end{enumalph}
\end{prop}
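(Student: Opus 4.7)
The plan is to prove the three parts in order, since (b) and (c) rest on the presentation established in (a). For part (a), my plan is to invoke Poincar\'e's polygon theorem applied to the Euclidean triangle $T^*$, with the orientation-preserving subgroup generated by the rotations $\delta_s$ about the vertices $v_s$. The vertex cycles give the torsion relations $\delta_s^s = 1$, and the closure of the triangular path around $T^*$ produces $\delta_c\delta_b\delta_a = 1$; alternatively, I would simply cite the classical treatment in Magnus \cite[\S II.4]{Magnus}.

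For part (b), I would use the presentation from (a) to define $\rho$ by specifying its values on generators. The natural candidate sends $\delta_s \mapsto 1/s$. First I would observe that in each Euclidean triple $(a,b,c) \in \{(3,3,3),(2,3,6),(2,4,4)\}$ one has $a \mid c$ and $b \mid c$, so all three images lie in $\tfrac{1}{c}\Z/\Z$. The torsion relations are automatic since $s \cdot (1/s) \equiv 0 \pmod 1$, while the triangular relation requires $1/a + 1/b + 1/c \equiv 0 \pmod 1$, which is exactly the Euclidean equality $1/a + 1/b + 1/c = 1$. Uniqueness is immediate because $\delta_a,\delta_b,\delta_c$ generate $\Delta$.

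For part (c), my plan is to reinterpret $\rho$ as a rotation-angle character. Every orientation-preserving Euclidean isometry has the form $z \mapsto e^{2\pi i \theta} z + w$, and the assignment $g \mapsto \theta \bmod 1$ defines a homomorphism $\mathrm{Isom}^+(\C) \to \RR/\Z$ whose kernel is exactly the translation subgroup. Its restriction to $\Delta$ sends $\delta_s \mapsto 1/s$, hence agrees with $\rho$ after the inclusion $\tfrac{1}{c}\Z/\Z \hookrightarrow \RR/\Z$. Therefore $\ker \rho = \Delta \cap \{\text{translations}\} = T(\Delta)$. The section $\Z/c\Z \to \Delta$ sends $1 \mapsto \delta_c$, which is well-defined because $\delta_c$ has order exactly $c$ (by (a)) and satisfies $\rho(\delta_c)=1/c$. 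Finally, since $T(\Delta)$ has finite index in the cocompact discrete group $\Delta$, it acts cocompactly on $\C$ by translations and is therefore a rank-$2$ lattice, yielding $\Delta \simeq \Z^2 \rtimes \Z/c\Z$.

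The main obstacle is really part (a): Poincar\'e's polygon theorem is classical but nontrivial to set up from scratch. Since a clean treatment exists in Magnus, I would simply cite it, leaving the remaining work as the short verifications sketched above.
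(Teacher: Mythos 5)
Your proposal is correct. Parts (a) and (b) coincide with the paper's treatment: (a) is handled by citation to Magnus (the paper cites \cite[Theorem 2.5]{Magnus}, proved there via Reidemeister--Schreier rather than Poincar\'e's polygon theorem, but this is immaterial), and (b) is the same verification that the relations are killed, using $1/a+1/b+1/c=1$ and the divisibilities $a \mid c$, $b \mid c$. Where you genuinely diverge is part (c). The paper argues in two ways: an explicit algebraic computation done case-by-case (working out $\Delta(2,3,6)$ in coordinates, writing every element as $z \mapsto \zeta_6^i z + \beta$ and reading off $\rho$), and a geometric argument tracking the central hexagon or square around $v_c$. You instead observe that the ``linear part'' map $\mathrm{Isom}^+(\C) \to \RR/\Z$, $z \mapsto e^{2\pi i\theta}z + w \mapsto \theta$, is a homomorphism with kernel exactly the translations, and that its restriction to $\Delta$ agrees with $\rho$ on generators and hence everywhere; this identifies $\ker\rho = T(\Delta)$ uniformly in all three cases. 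This is a clean, rigorous version of what the paper offers only as intuition (``composition accumulates rotation in an abelian way''), and it avoids the case analysis. What the paper's explicit computation buys in exchange is the concrete normal form $\delta(z)=\zeta_c^i z+\beta$, which it reuses later (e.g.\ in locating fixed points in Lemma \ref{lemma:RGgenByVOMR}). One small imprecision: you attribute the fact that $\delta_c$ has order exactly $c$ to part (a), but the presentation alone only gives order dividing $c$; the exact order follows either from the geometric realization of $\delta_c$ as a rotation by $2\pi/c$ or from $\rho(\delta_c)$ generating $\tfrac{1}{c}\Z/\Z$, both of which are available to you, so this is not a gap.
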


\begin{proof}
See Magnus \cite[Theorem 2.5]{Magnus} for a proof of (a) using the Reidemeister--Schreier method.  

For part (b), we check that the relations in $\Delta$ are satisfied: indeed, we have $\rho(\delta_s^s)=s(c/s) \equiv 0 \pmod{c}$, and $\rho(\delta_c\delta_b\delta_a)=c(1/a+1/b+1/c) \equiv 0 \pmod{c}$.  Alternatively, we define a group homomorphism first by taking the quotient by the commutator subgroup to surject onto $(\Z/a\Z \oplus \Z/b\Z \oplus \Z/c\Z)/\langle (1,1,1) \rangle$, then map to $\Z/c\Z$ via $(x,y,z) \mapsto x(c/a)+y(c/b)+z$.

Since it will be of some importance to us, we prove part (c) two ways.  First, we compute algebraically.  We treat the case $\Delta=\Delta(2,3,6)$, the other two being similar. Without loss of generality, we may suppose that $v_c=0$ and $v_b= 1$. Then $v_a=(\zeta_6+1)/2$, where $\zeta_6=\exp(2\pi i/6)$.  The translations in $\Delta$ are precisely those that translate by the $\Delta$ orbit of $v_c=0$, so $T(\Delta)$ is generated by $z \mapsto z + (\zeta_6+1)=z+2v_a$ and $z \mapsto z + \sqrt{3}i=z+(2\zeta_6-1)$.  We then compute directly that 
\[ \delta_a(z) = -z+(\zeta_6+1) \]
is the composition of the rotation $z \mapsto -z=\zeta_6^3 z$ in $\langle \delta_c \rangle$ followed by the translation 
$z \mapsto z + (\zeta_6 + 1)$ in $T(\Delta)$.  Since $\delta_b=\delta_c^{-1}\delta_a^{-1}$, we conclude that $\Delta=T(\Delta)\langle \delta_c \rangle$.  In particular, every transformation $\delta \in \Delta$ is of the form $\delta(z)=\zeta_6^i z + \beta$ for $i \in \Z/c\Z$ and with $z \mapsto z+\beta$ in $T(\Delta)$; and written this way, $\rho(\delta) = i \pmod{c}$, so indeed $\ker \rho = T(\Delta)$.  
(We may also verify independently that $T(\Delta)$ is normal in $\Delta$: if $\tau(z)=z+\beta \in T(\Delta)$ then
\begin{equation} 
(\delta_c^{-1} \tau \delta_c)(z)=z+\zeta_6^{-1}\beta = z+\delta_c^{-1}(\beta)
\end{equation}
is again translation by a point in the $\Delta$ orbit of $v_c$, so $\delta_c^{-1} \tau \delta_c \in T(\Delta)$.)  Finally, since $T(\Delta) \simeq \Z^2$ is generated freely by two translations, it follows that $\Delta \simeq \Z^2 \rtimes \Z/c\Z$ as claimed.

We may also argue geometrically, as follows.  Intuitively, each transformation $\delta_s$ rotates the plane by the corresponding interior angle $2\pi/s=(c/s)(2\pi/c)$, composition accumulates this rotation in an abelian way, and the resulting transformation is a translation if and only if the total amount of rotation sums to a multiple of $2\pi$.  In other words, every element of $\Delta$ is obtained by first rotation by a power of $\delta_c$ to put $E$ into one of $c$ positions, then translation of $E$: see Figure \ref{fig:deltalemmapng}.

\begin{equation} \label{fig:deltalemmapng}\addtocounter{equation}{1} \notag
\begin{gathered}
\includegraphics [scale =.3]{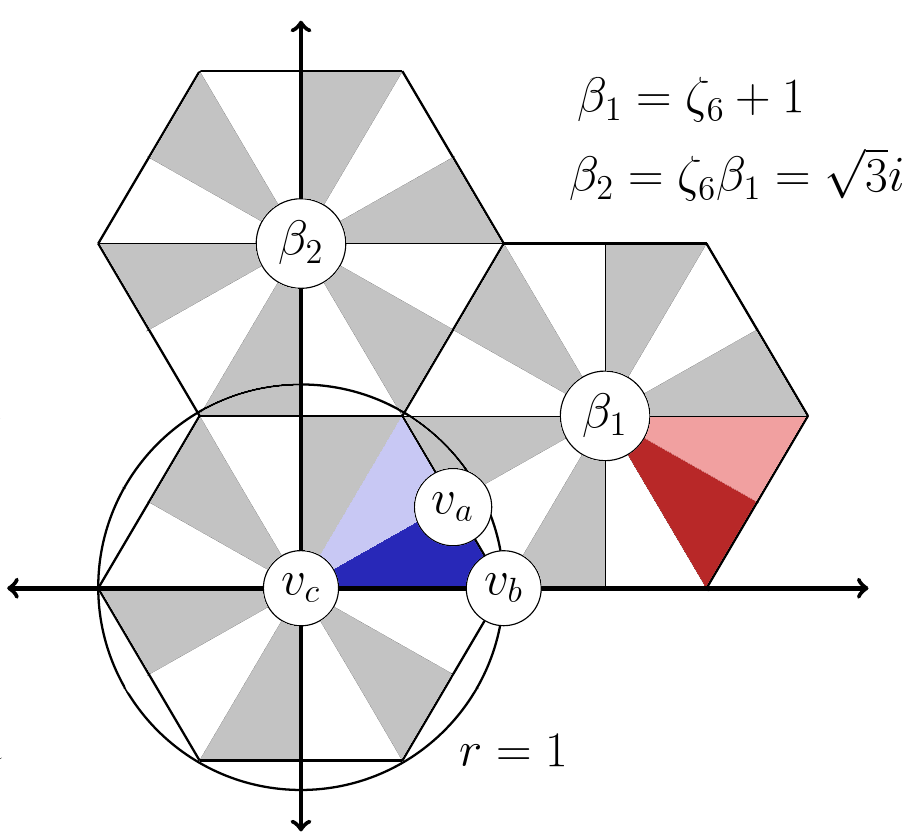}
\\[4pt]
\text{Figure \ref{fig:deltalemmapng}: Geometric proof of Proposition \ref{prop:TDNormal}(b)--(c)}
\end{gathered}
\end{equation}

More precisely, around $v_c$ there is a central hexagon or square $E$ consisting of $c$ pairs of white and black triangles.  Let $\delta \in \Delta$.  Then $\delta(E)$ is another hexagon or square in the tessellation, with center $\delta(v_c)$.  It is geometrically evident (and can be verified in a straightforward manner) that there is a unique translation $\tau_\delta \in T(\Delta)$ mapping $\delta(v_c)$ to $v_c$, so the composition fixes $v_c$ and maps $E$ to itself.  But again visibly, the stabilizer of $E$ in $\Delta$ is precisely $\langle \delta_c \rangle$.  This association thereby defines a surjective group homomorphism $\Delta \to \langle \delta_c \rangle$ with kernel $T(\Delta)$, as claimed.  Figure \ref{fig:deltalemmapng} gives the transformation taking $T^*$ (blue) to $T'$ (red) by first rotating about $v_c$ by $5\pi/3$ (applying $\delta_c^5$) then translating by $z \mapsto z + \beta_1$, an element of $T(\Delta)$.  
\end{proof}

\begin{corollary}  \label{cor:Tgens}
The group $T(\Delta)$ is generated by 
\begin{equation}
(\omega_1,\omega_2) \colonequals
\begin{cases}
(\delta_a \delta_c^2, \delta_b \delta_c^2), & \text{ if $(a,b,c)=(3,3,3)$;} \\
(\delta_a\delta_c^3,\delta_b\delta_c^4), & \text{ if $(a,b,c)=(2,3,6)$;} \\
(\delta_a \delta_c^2, \delta_b \delta_c^3), & \text{ if $(a,b,c)=(2,4,4)$.}
\end{cases}
\end{equation}
\end{corollary}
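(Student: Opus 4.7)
The plan is to prove the corollary by direct computation in each of the three Euclidean cases, following the coordinate approach from the proof of Proposition~\ref{prop:TDNormal}(c). For each triple $(a,b,c)$, the argument splits into three steps: (i)~check that the proposed $\omega_1, \omega_2$ lie in $T(\Delta)$; (ii)~compute them as explicit translations; (iii)~verify that they form a $\Z$-basis for the translation lattice.

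For step~(i), apply the homomorphism $\rho$ from Proposition~\ref{prop:TDNormal}(b). In the $(2,3,6)$ case, $\rho(\delta_a \delta_c^3) = 3 + 3 \equiv 0 \pmod{6}$ and $\rho(\delta_b \delta_c^4) = 2 + 4 \equiv 0 \pmod{6}$, and the checks for $(3,3,3)$ and $(2,4,4)$ are identical in spirit; in each case the exponent of $\delta_c$ in each factor is chosen precisely so that $\rho$ vanishes, so $\omega_1, \omega_2 \in T(\Delta)$. For step~(ii), place $v_c = 0$ and $v_b = 1$ as in the proof of Proposition~\ref{prop:TDNormal}(c), so that each generator is the affine map $\delta_s(z) = \zeta_c^{c/s} z + (1 - \zeta_c^{c/s}) v_s$ with $\zeta_c = \exp(2\pi i/c)$. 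Composing then yields $\omega_1, \omega_2$ as translations by vectors $\tau_1, \tau_2$ that can be read off directly (for $(2,3,6)$ one recovers $\tau_1 = \zeta_6 + 1$ and $\tau_2 = 1 - \zeta_6^2$, which span the same lattice as the generators $\zeta_6+1,\sqrt{3}i$ singled out in the proof of Proposition~\ref{prop:TDNormal}(c), since $\tau_1 - \tau_2 = \sqrt{3}i$).

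For step~(iii), the cleanest finish is a covolume comparison. By Proposition~\ref{prop:TDNormal}(c) we have $[\Delta : T(\Delta)] = c$, and since a fundamental domain for $\Delta$ consists of one white and one black triangle meeting across an edge, the covolume of $T(\Delta)$ in $\C$ equals $2c \cdot \operatorname{Area}(T^*)$. Comparing this with the parallelogram area $\lvert\im(\tau_1 \overline{\tau_2})\rvert$ and using $\Z\tau_1 + \Z\tau_2 \subseteq T(\Delta)$ forces equality of the two lattices. The main obstacle is simply the case-by-case bookkeeping across $(3,3,3)$, $(2,3,6)$, and $(2,4,4)$; conceptually nothing is novel beyond Proposition~\ref{prop:TDNormal}, so the proof amounts to a mechanical verification once the coordinates and compositional conventions are fixed.
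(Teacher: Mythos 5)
Your proposal is correct. The first half (membership of $\omega_1,\omega_2$ in $T(\Delta)$ via $\rho$) is exactly the paper's argument, and your sample computations check out: in the $(2,3,6)$ normalization $v_c=0$, $v_b=1$ one indeed gets $\omega_1(z)=z+(\zeta_6+1)$ and $\omega_2(z)=z+(1-\zeta_6^2)$, with $\tau_1-\tau_2=\zeta_6+\zeta_6^2=\sqrt{3}i$, so $\Z\tau_1+\Z\tau_2$ coincides with the lattice $\Z(\zeta_6+1)+\Z\sqrt{3}i$ identified in the proof of Proposition \ref{prop:TDNormal}(c). Where you diverge is in how you close the argument: the paper simply asserts, reading off Figure \ref{fig:tesseuc}, that the $\langle\omega_1,\omega_2\rangle$-orbit of $v_c$ equals the $T(\Delta)$-orbit, whereas you establish equality of the sublattice with $T(\Delta)$ by comparing covolumes ($\mathrm{covol}(T(\Delta)) = 2c\cdot\mathrm{Area}(T^*) = \lvert\im(\tau_1\overline{\tau_2})\rvert$, forcing index one). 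Your route costs more case-by-case coordinate bookkeeping (you must fix an explicit normalization for $(3,3,3)$ and $(2,4,4)$ as well), but it buys a verification that does not lean on the picture; the paper's route is shorter but leaves the orbit comparison as a visual check. Both are legitimate, and in the $(2,3,6)$ case your step (ii) already finishes the proof on its own, since the lattice generators were computed explicitly in Proposition \ref{prop:TDNormal}(c).
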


\begin{proof}
In each case, $\omega_1$ and $\omega_2$ are in the kernel of the homomorphism $\rho$ described in proposition $\ref{prop:TDNormal}$, and thus $\omega_1, \omega_2 \in T(\Delta)$. From Figure \ref{fig:tesseuc}, it is straightforward to verify that the $\langle \omega_1, \omega_2 \rangle$ orbit of $v_c$ is the same as the $T(\Delta)$ orbit of $v_c$, so $T(\Delta) = \langle \omega_1, \omega_2 \rangle$.
\end{proof}

Visibly from Figure \ref{fig:tesseuc} we have $\Delta(3,3,3) \trianglelefteq \Delta(2,3,6)$ with index $2$ (halving a fundamental triangle), and $T(\Delta(3,3,3))=T(\Delta(2,3,6))$.  Attached to each translation subgroup is the orbit of $0$ 
\begin{equation}
\Lambda_{\Delta} \colonequals T(\Delta)\cdot 0 
\end{equation}
which defines a lattice $\Lambda_{\Delta} \simeq \Z^2$.  We write
\begin{equation} \label{eqn:squarehex}
\begin{aligned}
\Lambda_{\square} &\colonequals \Lambda_{\Delta(2,4,4)} = \Z[i] \\
\Lambda_{\hexagon} &\colonequals \Lambda_{\Delta(3,3,3)} = \Lambda_{\Delta(2,3,6)} = \Z[\zeta_6]
\end{aligned}
\end{equation}

\begin{remark} \label{rmk:homothetylattice}
More precisely, we work with these lattices up to homothety, rescaling by an element of $\C^\times$; to obtain elliptic curves defined over $\Q$ (see section \ref{sec:F}), we must rescale by a real number (which can be given explicitly as a real period).
\end{remark}

\subsection{Fundamental domains} \label{sec:quots}

In this section, we describe fundamental domains for the groups under consideration.  
A fundamental domain for the action of $\Delta$ is obtained from any pair of one shaded triangle and one unshaded triangle which we may take to share an edge. This gives a region where all the interior points are distinct under the identification $\Delta \circlearrowright \C$. Furthermore, we can divide the four sides of the quadrilateral into two pairs of consecutive sides 
identified under the quotient by $\Delta$ as in Figure \ref{fig:DeltaFundoms},  
so that $X(\Delta)$ has genus 0.  
\begin{equation} \label{fig:DeltaFundoms}\addtocounter{equation}{1} \notag
\begin{gathered}
\includegraphics[scale = .2]{./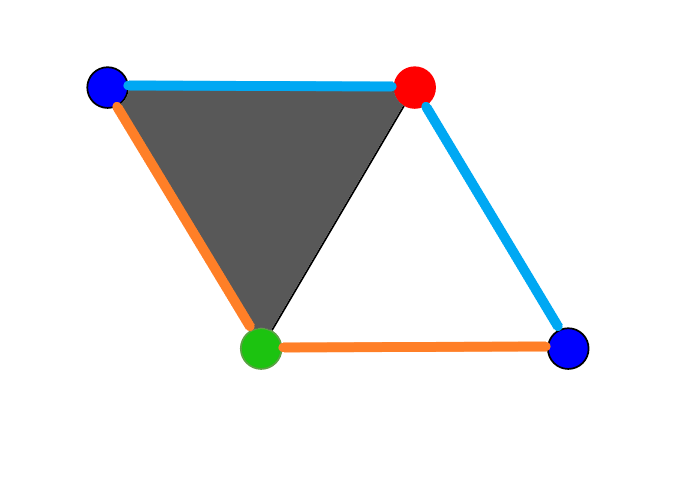}
\includegraphics[scale = .2]{./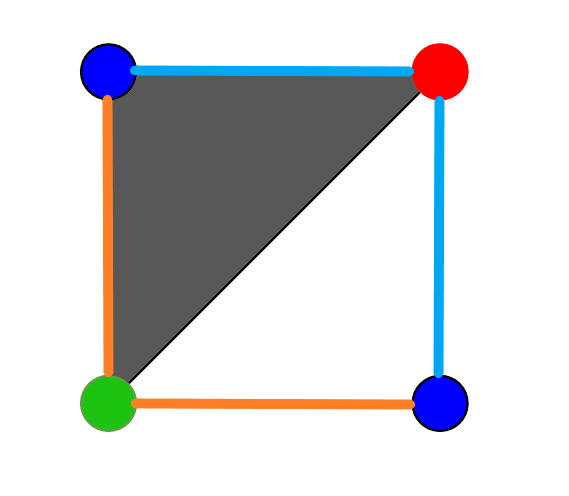}
\includegraphics[scale = .2]{./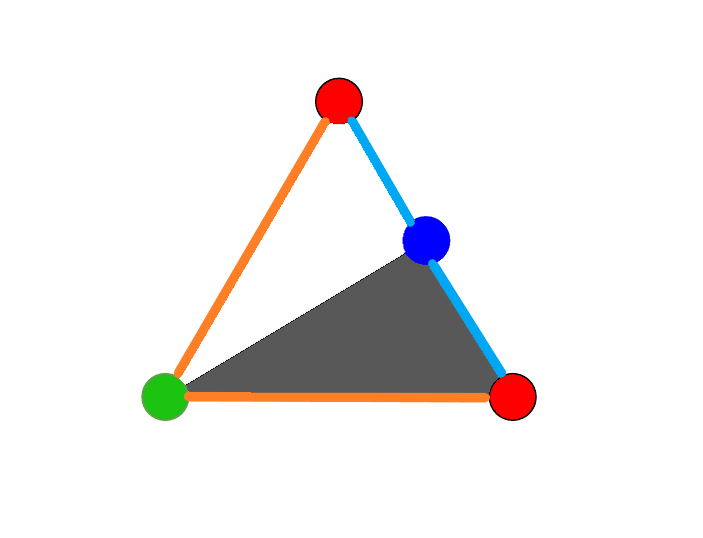}
\\[4pt]
\text{Figure \ref{fig:DeltaFundoms}: Fundamental domains for $\Delta$, like colored edges and vertices are identified}
\end{gathered}
\end{equation}

Since $T(\Delta)$ is generated by two noncollinear translations, we can take as its fundamental domain the parallelogram determined from two sides sharing a vertex at the origin. Opposite edges are identified while consecutive edges are distinct, so the fundamental region is equivalent to a torus (genus 1). Similar statements hold for $T(\Gamma)$.  
\begin{equation} \label{fig:FundomsTD}\addtocounter{equation}{1} \notag
\begin{gathered}
\includegraphics[scale = .32]{./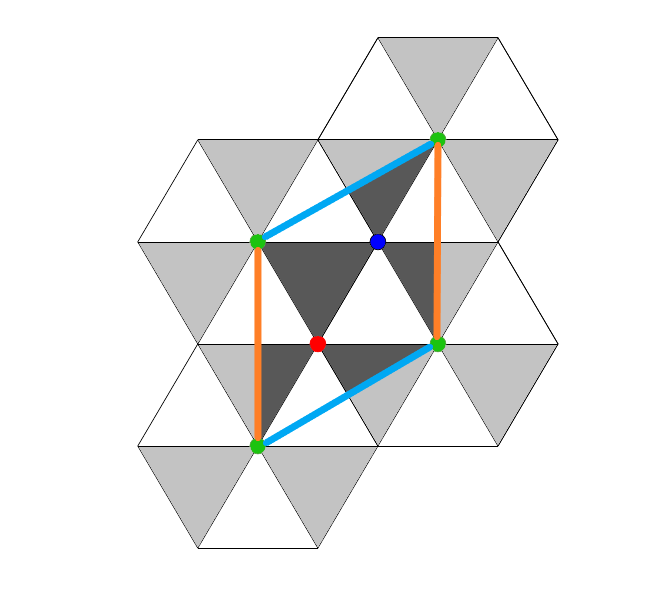}
\includegraphics[scale = .30]{./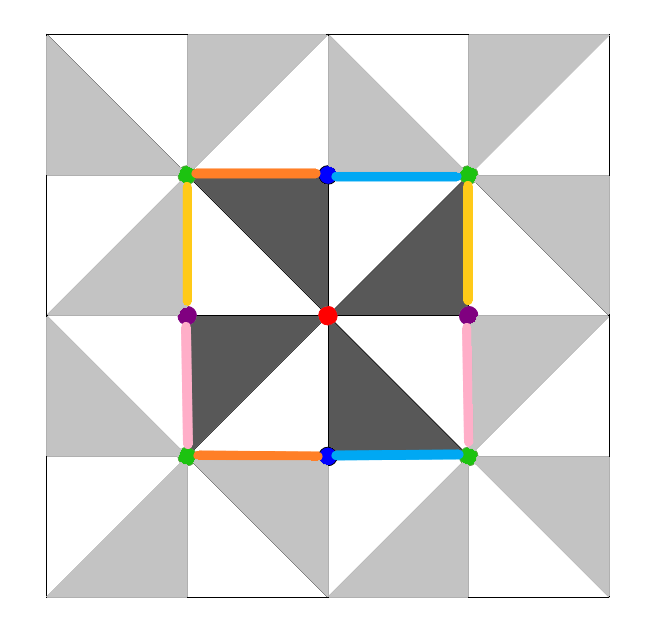}
\includegraphics[scale = .24]{./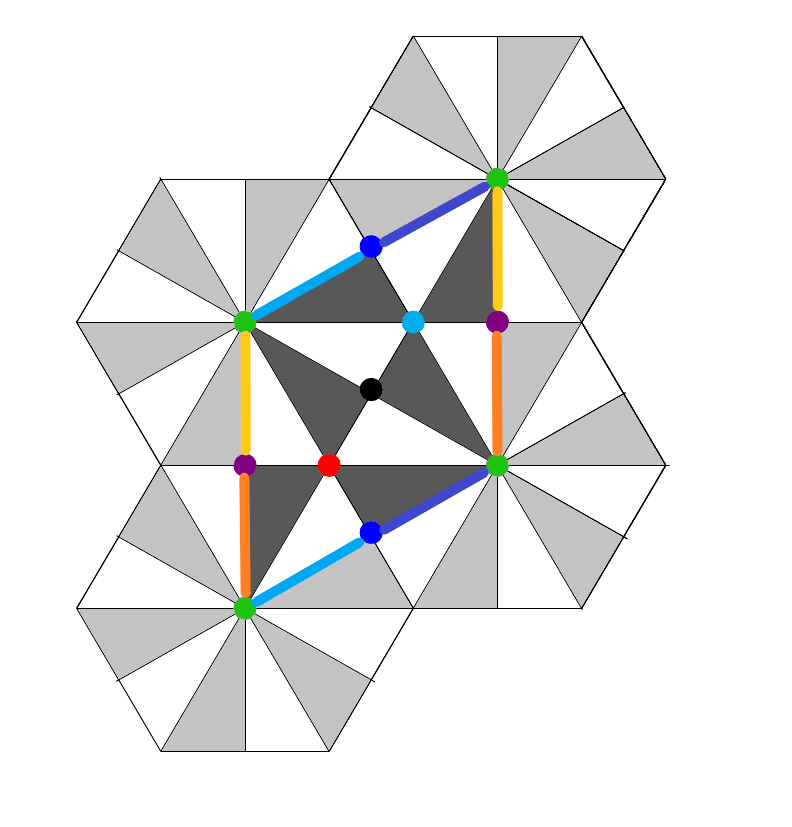}
\\[4pt]
\text{Figure \ref{fig:FundomsTD}: Fundamental domains for $T(\Delta)$, like colors identified}
\end{gathered}
\end{equation}

Finally, a fundamental domain for $\Gamma$ is constructed in the usual manner: we choose coset representatives $\Delta = \bigsqcup_{i=1}^d \gamma_i \Gamma$, and then for $D(\Delta)$ the fundamental domain for $\Delta$ we have the fundamental domain $D(\Gamma)=\bigcup_{i=1}^d \gamma_i D(\Delta)$. 

We now consider the genus of the surface $X(\Gamma) \colonequals \Gamma \backslash \C$.  Given a permutation $\tau \in S_d$, let $k(\tau)$ be the number of disjoint cycles in $\tau$ and define its \defi{excess} as $e(\tau) \colonequals d-k(\tau)$.  Then by the Riemann--Hurwitz formula, the genus of $X(\Gamma)$ is equal to \cite[(1.5)]{SijslingVoight}
\begin{equation} \label{eqn:RH}
g(X(\Gamma)) = 1-d + \frac{e(\sigma_0)+e(\sigma_1)+e(\sigma_\infty)}{2}. 
\end{equation}

\begin{lemma} \label{lemma:Genus}
We have $g(X(\Gamma)) \leq 1$, with equality if and only if for all $s \in \{a,b,c\}$, every cycle in $\sigma_s$ has length $s$.  
\end{lemma}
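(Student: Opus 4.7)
The plan is to apply the Riemann--Hurwitz formula \eqref{eqn:RH} after bounding each excess $e(\sigma_s)$ individually; the Euclidean relation $1/a+1/b+1/c = 1$ will then make everything line up so the three bounds sum to exactly $2d$.

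First I would fix $s \in \{a,b,c\}$ and decompose $\sigma_s$ into disjoint cycles of lengths $\ell_1,\dots,\ell_{k_s}$ (including fixed points as $1$-cycles) with $\sum_i \ell_i = d$, so that $e(\sigma_s) = \sum_i(\ell_i - 1)$. Since $\sigma_s$ has order exactly $s$, every $\ell_i$ divides $s$, and in particular $\ell_i \leq s$. The key elementary step is the inequality
\[
\ell_i - 1 \leq \ell_i\bigl(1 - \tfrac{1}{s}\bigr),
\]
which is equivalent to $\ell_i \leq s$ and holds with equality precisely when $\ell_i = s$. Summing over $i$ yields $e(\sigma_s) \leq d(1 - 1/s)$, with equality if and only if every cycle of $\sigma_s$ has length exactly $s$.

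Adding these bounds for $s = a,b,c$ and invoking the Euclidean identity gives
\[
e(\sigma_0) + e(\sigma_1) + e(\sigma_\infty) \leq 3d - d\bigl(\tfrac{1}{a}+\tfrac{1}{b}+\tfrac{1}{c}\bigr) = 2d,
\]
with equality if and only if all three $\sigma_s$ consist entirely of $s$-cycles simultaneously. Substituting into \eqref{eqn:RH} then produces $g(X(\Gamma)) \leq 1 - d + d = 1$ together with the stated characterization of equality. I do not anticipate any real obstacle here; the argument reduces to the elementary inequality above, and the Euclidean condition is precisely what converts the cycle-length bound into the clean estimate $2d$.
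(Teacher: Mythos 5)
Your proof is correct and is essentially the paper's argument: the paper bounds $k(\sigma_s) \ge d/s$ directly from the fact that each cycle length is at most $s$, which is exactly what your cycle-by-cycle inequality $\ell_i - 1 \leq \ell_i(1-\tfrac{1}{s})$ gives after summing. Both yield $e(\sigma_s) \le d(1-1/s)$ with the same equality condition, and the rest is identical.
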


\begin{proof}
For $s \in \{a,b,c\}$ since the cycle decomposition of $\sigma_s$ can contain no cycle of length greater than $s$, we have $k(\sigma_s) \ge d/s$, so
\[ e(\sigma_a) + e(\sigma_b) + e(\sigma_c) \le 3d - \left(\frac{d}{a} + \frac{d}{b} + \frac{d}{c}\right) = 3d -d = 2d \]
with equality if and only if all cycles in $\sigma_s$ are length $s$.  Substituting this into \eqref{eqn:RH}, the result follows.
\end{proof}

\begin{remark}
We will see later, in Corollary \ref{corr:genus1}, that $g(X(\Gamma)) = 1$ if and only if $\Gamma = T(\Gamma)$.
\end{remark}

\subsection{Translation subgroups} \label{sec:TransSubgps}

Let 
\begin{equation}
T(\Gamma) \colonequals \Gamma \cap T(\Delta) = \ker \rho|_{\Gamma} 
\end{equation}
be the subgroup of translations in $\Gamma$; then $T(\Gamma) \trianglelefteq \Gamma$, as  $T(\Delta) \trianglelefteq \Delta$ by Proposition \ref{prop:TDNormal}.  Writing $E(\Gamma) \colonequals T(\Gamma) \backslash \C$ and similarly for $\Delta$, the containments of these four groups give quotient maps which fit into the diagram \eqref{eqn:EXDeltaGamma}.

We again have a lattice 
\begin{equation} \label{eqn:lamgam}
\Lambda_{\Gamma} \colonequals T(\Gamma)\cdot 0 
\end{equation}
with $\Lambda_{\Gamma} \leq \Lambda_{\Delta}$ a subgroup of finite index.  When no confusion can arise, we will identify translation maps by the corresponding lattice element.  We define
\begin{equation}
N \colonequals [T(\Delta):T(\Gamma)].
\end{equation}

In the following algorithm, we compute a convenient basis for $T(\Gamma)$.

\begin{algorithm} \label{alg: basisForTG}
This algorithm takes as input $\sigma$ and outputs a basis $\eta_1,\eta_2$ for $T(\Gamma)$ and $N =[T(\Delta):T(\Gamma)]$.  

\begin{enumalg}
\item Let $\pi$ be the transitive permutation representation attached to $\sigma$, and for $i=1,2$, let $\omega_i$ be as in Corollary \ref{cor:Tgens} (a basis for $T(\Delta)$).

\item Let $\tau_1$ be the cycle containing $1$ in $\pi(\omega_1)$ and let $\tau_2$ be the cycle containing $1$ in $\pi(\omega_2^{-1})$.  For $i=1,2$, let $\ell_i$ be the length of $\tau_i$.

\item Compute  
\[ V \colonequals \bigl\{(b_1, b_2) : 0 \leq b_i \leq \ell_i \text{ for $i=1,2$ and } 1^{\tau_1^{b_1}} = 1^{\tau_2^{b_2}}\bigr\}. \]

\item Let $A$ be the matrix whose rows are the elements of $V$.  Reduce $A$ to Hermite normal form (HNF) and take its first two row vectors $(n_1, n_2)$ and $(0, m_2)$. 

\item Return $\eta_1 = \omega_1^{n_1} \omega_2^{n_2}$ and $\eta_2 = \omega_2^{m_2}$ and $N=n_1m_2$.
\end{enumalg}
\end{algorithm}

\begin{proof}[Proof of correctness]
Since $\omega_1$ and $\omega_2$ commute, any $\eta \in T(\Delta) $ is of the form $\eta = \omega_1^{a_1}\omega_2^{a_2}$ for some $(a_1, a_2) \in \Z^2$. By definition, such $\eta \in T(\Gamma)$ if and only if $1^{(\pi_1(\omega_1)^{a_1}\pi_2(\omega_2)^{a_2})}=1$, or equivalently when $1^{\tau_1^{a_1}} = 1^{{\tau_2^{a_2}}}$.  Since $\tau_i$ has order $\ell_i$, we only need to consider $0 \leq a_i \leq \ell_i$ for $i=1,2$.  The $\Z$-span of $V$ therefore gives all pairs $(a_1,a_2)$ such that $\eta = \omega_1^{a_1}\omega_2^{a_2}$ is in $T(\Gamma)$.  Since only row operations are performed in computing the Hermite normal form, the $\Z$-span does not change, hence $\eta_1,\eta_2$ computed in step 5 generate $T(\Gamma)$.  Finally, we have 
\[ N=[T(\Delta):T(\Gamma)]=\det \begin{pmatrix} n_1 & n_2 \\ 0 & m_2 \end{pmatrix} = n_1m_2. \qedhere \]
\end{proof}

\subsection{Rotation index}
\label{sec:RotInd}

In this section, we study rotations in $\Gamma$.  Restricting the exact sequence \eqref{eqn:exactseq!} we obtain
\[ 1 \to T(\Gamma) \to \Gamma \to R(\Gamma) \to 1 \]
where $R(\Gamma) \colonequals \rho(\Gamma) \leq \Z/c\Z$.  Evidently, $R(\Gamma)$ is a cyclic group with order dividing $c$.

\begin{defn} \label{defn:RotInd}
The \defi{rotation index} of $\Gamma$ is $r(\Gamma) \colonequals [\Gamma:T(\Gamma)]=\#R(\Gamma)$.
\end{defn}

\begin{lemma} \label{lem:RotInd} 
We have
\[ r(\Gamma) = \frac{cN}{d} \]
where $N=[T(\Delta):T(\Gamma)]$.
\end{lemma}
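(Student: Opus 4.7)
The plan is to compute the index $[\Delta:T(\Gamma)]$ in two different ways using the multiplicativity of indices in a tower of subgroups, and then equate the results.

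Observe that $T(\Gamma) \trianglelefteq \Gamma \leq \Delta$ and $T(\Gamma) \trianglelefteq T(\Delta) \trianglelefteq \Delta$, so we have two chains of subgroups joining $T(\Gamma)$ to $\Delta$. From Proposition \ref{prop:TDNormal}(c) we have $[\Delta:T(\Delta)] = c$, and by hypothesis $[\Delta:\Gamma] = d$ and $[T(\Delta):T(\Gamma)] = N$. By Definition \ref{defn:RotInd} we have $[\Gamma:T(\Gamma)] = r(\Gamma)$.

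Then I would write
\[ [\Delta:T(\Gamma)] = [\Delta:\Gamma]\cdot [\Gamma:T(\Gamma)] = d \cdot r(\Gamma) \]
from the first chain, and
\[ [\Delta:T(\Gamma)] = [\Delta:T(\Delta)]\cdot [T(\Delta):T(\Gamma)] = c \cdot N \]
from the second. Setting these equal and solving gives $r(\Gamma) = cN/d$.

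There is no real obstacle: the statement is essentially a bookkeeping identity once one recognizes that $T(\Gamma) = \Gamma \cap T(\Delta)$ sits at the bottom of two interlocking index chains. The only minor point to verify is that $r(\Gamma) = cN/d$ is indeed an integer, which is automatic because $r(\Gamma)$ was defined as a group-theoretic index; this can be read off from the derivation since the left-hand side $[\Delta:T(\Gamma)] = dr(\Gamma)$ is clearly an integer.
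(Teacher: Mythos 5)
Your proof is correct and is essentially identical to the paper's: both compute $[\Delta:T(\Gamma)]$ via the two chains $T(\Gamma)\leq\Gamma\leq\Delta$ and $T(\Gamma)\leq T(\Delta)\leq\Delta$ and equate $d\,r(\Gamma)=cN$. Nothing further is needed.
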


\begin{proof}
From
\[ [\Delta : T(\Gamma)] = [\Delta:\Gamma][\Gamma:T(\Gamma)]=[\Delta:T(\Delta)][T(\Delta):T(\Gamma)] \]
we conclude $dr(\Gamma)=cN$.
\end{proof}

In Proposition \ref{prop:TDNormal}(c) we split the exact sequence using $\delta_c$.  Indeed, the analogous sequence for $\Gamma$ above is again split, but not necessarily by a power of $\delta_c$: instead, $R(\Gamma)$ is generated by a rotation about some vertex (an element in the $\Delta$ orbit of $v_a$, $v_b$, or $v_c$), as follows.

\begin{lemma} \label{lemma:RGgenByVOMR}
There exists a vertex $v_O$ whose stabilizer $\gamma_O \in \Gamma$ has $\rho(\gamma_O)$ a generator of $R(\Gamma)$, giving a split exact sequence
\[ 1 \to T(\Gamma) \to \Gamma \to \langle \gamma_O \rangle \to 1 \]
so in particular $\Gamma = T(\Gamma) \langle \gamma_O \rangle \simeq \Z^2 \rtimes \Z/r(\Gamma)\Z$.  
\end{lemma}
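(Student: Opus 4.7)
My plan is to exploit the fact that every element of $\Delta$ has the explicit form $z \mapsto \zeta_c^i z + \beta$ (as established in the proof of Proposition \ref{prop:TDNormal}(c)), so that every non-translation element of $\Delta$ is a genuine rotation with a unique fixed point, and then to argue separately that this fixed point must be a vertex of the tessellation.

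First I would handle the trivial case: if $R(\Gamma)$ is trivial then $\Gamma = T(\Gamma)$, and one may take $\gamma_O = 1$ (with $v_O$ any vertex). Otherwise, choose any $\gamma_O \in \Gamma$ with $\rho(\gamma_O)$ generating the cyclic subgroup $R(\Gamma) \leq \Z/c\Z$. Writing $\gamma_O(z) = \zeta_c^i z + \beta$ with $i = \rho(\gamma_O) \not\equiv 0 \pmod{c}$, we have $\zeta_c^i \neq 1$, so $\gamma_O$ has the unique fixed point $v_O \colonequals \beta/(1-\zeta_c^i)$.

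Next I would argue that $v_O$ must be a vertex of the tessellation. This follows from the classical fact that for Euclidean triangle groups the $\Delta$-stabilizer of any point in $\C$ is either trivial or cyclic generated by a rotation about a vertex of type $v_a$, $v_b$, or $v_c$: points in the interior of a fundamental triangle (or on an edge away from a vertex) have trivial stabilizer because the tessellation tiles $\C$ without fixed points there, while the cone points in the quotient $\Delta \backslash \C$ correspond exactly to the orbits of $v_a, v_b, v_c$. Since $\gamma_O \in \Delta$ is a nontrivial element fixing $v_O$, the point $v_O$ is forced to be in the $\Delta$-orbit of one of the three vertex types, which is what ``vertex'' means.

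It remains to verify that $\gamma_O$ has order exactly $r(\Gamma)$ in $\Gamma$, so that the section $R(\Gamma) \to \Gamma$ sending $\rho(\gamma_O) \mapsto \gamma_O$ is a well-defined group homomorphism splitting $\rho|_\Gamma$. Because $\rho(\gamma_O)$ generates the unique subgroup of order $r(\Gamma)$ in $\Z/c\Z$, we have $\rho(\gamma_O) = (c/r(\Gamma)) u$ with $\gcd(u, r(\Gamma)) = 1$, so the rotation angle of $\gamma_O$ is $(2\pi u)/r(\Gamma)$ about $v_O$, giving $\gamma_O^{r(\Gamma)} = \mathrm{id}$ and no smaller power being trivial. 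Combined with the exact sequence preceding the statement, this yields the split exact sequence and the isomorphism $\Gamma \simeq T(\Gamma) \rtimes \langle \gamma_O \rangle \simeq \Z^2 \rtimes \Z/r(\Gamma)\Z$.

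The only subtle step is the claim that $v_O$ is a vertex of the tessellation; everything else is a direct calculation with the normal form $z \mapsto \zeta_c^i z + \beta$. I would justify this subtle step by citing (or giving a short direct proof via) the standard fact about stabilizers in Euclidean triangle groups, which is essentially implicit in Proposition \ref{prop:TDNormal} and Magnus \cite{Magnus}.
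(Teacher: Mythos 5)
Your argument is correct and follows essentially the same route as the paper: choose $\gamma_O$ mapping to a generator of $R(\Gamma)$, observe that any non-translation in $\Delta$ is a rotation with a unique fixed point which must be a vertex (you justify this via point stabilizers/cone points, the paper via conjugacy of finite-order elements to the generators $\delta_a,\delta_b,\delta_c$ --- the same classical fact), and conclude the splitting. Your explicit check that $\gamma_O$ has order exactly $r(\Gamma)$ is a welcome bit of extra detail that the paper compresses into ``the splitting follows immediately.''
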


\begin{proof}
Every element of $\Delta$ is either a translation (and fixes no point) or fixes a unique point ($z \mapsto uz+v$ fixes $z=v/(1-u)$ if $u \neq 1$), necessarily a vertex as every nonidentity element of finite order in $\Delta$ is conjugate to one of the generators $\delta_a,\delta_b,\delta_c$.  So let $\gamma_O \in \Gamma$ be any element which maps to a generator of $R(\Gamma)$ under $\rho$, well-defined up to a translation in $T(\Gamma)$.  If  $\gamma_O$ is a translation, which is to say $\gamma_O \in T(\Gamma)$, then $R(\Gamma)$ is trivial: hence $\Gamma=T(\Gamma)$, and we may take $v_O$ to be any vertex (each having trivial stabilizer under $\Gamma$).  

Otherwise, $\gamma_O$ fixes a vertex $v_O$ with the claimed properties; the splitting follows immediately, just as we saw in the geometric proof of Proposition \ref{prop:TDNormal}(c).
\end{proof}

\begin{defn} \label{defn:maxstab}
A vertex $v_O$ whose stabilizer generates $R(\Gamma)$ is called a \defi{vertex of maximum rotation}.
\end{defn}

With Lemma \ref{lemma:RGgenByVOMR}, we can be more precise about the possible vertices of maximal rotation.  

\begin{corollary}  \label{cor:maxtgamm}
The vertices of maximal rotation, up to translation by $T(\Gamma)$, are in bijection with the union of the sets of cycles $\tau$ in $\sigma_s$ with length $s/r(\Gamma)$ for $s \in \{a,b,c\}$.
\end{corollary}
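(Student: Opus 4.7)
The plan is to trace through the correspondence between the transitive permutation representation $\pi \colon \Delta \to S_d$ (where $\Gamma$ is the stabilizer of $1$) and the action of $\Delta$ on the vertices of the tessellation, then show that for vertices of maximum rotation the $\Gamma$-orbit structure and the $T(\Gamma)$-orbit structure coincide.

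First, for each $s \in \{a,b,c\}$, the vertices of type $s$ (those in the $\Delta$-orbit of $v_s$) are in bijection with the coset space $\Delta/\langle \delta_s \rangle$, since $\text{Stab}_\Delta(v_s) = \langle \delta_s \rangle$ is cyclic of order $s$. I would show the $\Gamma$-orbits on these vertices are in bijection with the double cosets $\Gamma \backslash \Delta / \langle \delta_s \rangle$, which via the identification $\Gamma \backslash \Delta \leftrightarrow \{1,\dots,d\}$ correspond to $\langle \sigma_s \rangle$-orbits on $\{1,\dots,d\}$, i.e., to cycles of $\sigma_s$. For a vertex $v = \delta(v_s)$ the stabilizer in $\Gamma$ is $\Gamma \cap \delta \langle \delta_s \rangle \delta^{-1}$, a cyclic subgroup whose order divides $s$; the orbit-stabilizer relation applied to the double coset shows that the cycle of $\sigma_s$ corresponding to $v$ has length $\ell = s/\lvert \text{Stab}_\Gamma(v)\rvert$.

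Next, I would use the injectivity of $\rho$ on finite-order subgroups of $\Delta$: since every nontrivial translation has infinite order, any finite cyclic subgroup of $\Delta$ embeds under $\rho$ into $\Z/c\Z$, so $\lvert\rho(\text{Stab}_\Gamma(v))\rvert = \lvert\text{Stab}_\Gamma(v)\rvert$. Hence $v$ is a vertex of maximum rotation exactly when $\rho(\text{Stab}_\Gamma(v)) = R(\Gamma)$, which by the previous paragraph happens exactly when the corresponding cycle of $\sigma_s$ has length $s/r(\Gamma)$. (When $r(\Gamma) \nmid s$ there are no such cycles and no such vertices of type $s$ contribute, which is automatic on both sides.)

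Finally, to pass from $\Gamma$-orbits to $T(\Gamma)$-orbits, I would observe that for a vertex $v_O$ of maximum rotation the stabilizer $\text{Stab}_\Gamma(v_O)$ surjects onto $R(\Gamma) = \Gamma/T(\Gamma)$, so $\Gamma = T(\Gamma)\cdot\text{Stab}_\Gamma(v_O)$, and therefore the $\Gamma$-orbit of $v_O$ coincides with its $T(\Gamma)$-orbit (this is exactly the splitting of Lemma \ref{lemma:RGgenByVOMR} applied pointwise). Assembling the three steps gives the claimed bijection. I expect the only real care needed is bookkeeping around left- versus right-coset conventions in the passage from double cosets $\Gamma \backslash \Delta/\langle\delta_s\rangle$ to cycles of $\sigma_s$; once that convention is pinned down, everything else is routine orbit-stabilizer.
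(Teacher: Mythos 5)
Your proof is correct and follows essentially the same route as the paper's: identify the $\Gamma$-orbits of vertices lying over $v_s$ with the cycles of $\sigma_s$, compute the stabilizer order of such a vertex as $s/\ell(\tau)$, and characterize maximal rotation by $\ell(\tau)=s/r(\Gamma)$. The paper's version is much terser; your closing step, showing that for a vertex of maximal rotation the $\Gamma$-orbit coincides with the $T(\Gamma)$-orbit (via $\Gamma = T(\Gamma)\cdot\mathrm{Stab}_\Gamma(v_O)$), makes explicit a point the paper leaves implicit.
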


\begin{proof}
Under the quotient map $\Gamma \backslash \C \to \Delta \backslash \C$, for $s \in \{a,b,c\}$, the preimages of the vertex $v_s$ are in bijection with the cycles in $\sigma_s$ and the stabilizer of a vertex with cycle $\tau$ has order $s/\ell(\tau)$ where $\ell(\tau)$ is the length of $\tau$.  Such a vertex has maximal rotation if and only if $s/\ell(\tau)=r(\Gamma)$.
\end{proof}

Because a permutation triple which is simultaneously conjugate to $\sigma$ gives an isomorphic Belyi map (with differently labelled sheets), we may suppose without loss of generality that one of $v_a,v_b,v_c$ is a vertex of maximal rotation: after simultaneous conjugation, we just insist that $1$ belongs to a cycle as in Corollary \ref{cor:maxtgamm}.  This ``preprocessing'' step is given as follows.

\begin{algorithm}\label{alg:preproc}
This algorithm takes as input a Euclidean permutation triple $\sigma$ and gives as output the rotation index $r(\Gamma)$ and a simultaneously conjugate triple $\sigma'$ and $s \in \{a,b,c\}$ such that one of $v_a,v_b,v_c$ is a vertex of maximal rotation
\begin{enumalg}
\item Compute $N$ using Algorithm \ref{alg: basisForTG} and $r(\Gamma)=cN/d$.  
\item By trying all possibilities, find a cycle $\tau$ in $\sigma_s$ with $s\in \{a,b,c\}$ with length $\ell(\tau)=s/r(\Gamma)$.  
\item For any $i \in \tau$, return $r(\Gamma)$ and the simultaneous conjugation of $\sigma$ by $(1\,i)$.
\end{enumalg}
\end{algorithm}

\begin{proof}
In Step 1, the rotation index is computed correctly by Lemma \ref{lem:RotInd}.  Step 2 will succeed by \ref{cor:maxtgamm}.  By choice of $\Gamma$ as the stabilizer of $1$, we conclude that $v_s$ is a vertex of maximal rotation. 
\end{proof}

From here forward, we may suppose without loss of generality that this ``preprocessing'' step has been applied.

We now see the exact circumstances when $g(X(\Gamma)) = 1$.

\begin{corr} \label{corr:genus1} 
We have $g(X(\Gamma)) = 1$ if and only if $r(\Gamma)=1$ if and only if $\Gamma=T(\Gamma)$.  
\end{corr}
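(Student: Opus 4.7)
The plan is to show the chain (iii) $\Rightarrow$ (i) $\Rightarrow$ (ii) $\Rightarrow$ (iii), where (i), (ii), (iii) denote $g(X(\Gamma)) = 1$, $r(\Gamma) = 1$, $\Gamma = T(\Gamma)$ respectively. The equivalence of (ii) and (iii) is immediate from Definition \ref{defn:RotInd}: since $r(\Gamma) = [\Gamma : T(\Gamma)]$, the index is $1$ exactly when $\Gamma = T(\Gamma)$.

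For (iii) $\Rightarrow$ (i), I would note that if $\Gamma = T(\Gamma)$, then every nonidentity element of $\Gamma$ is a nontrivial translation of $\C$, and translations have no fixed points. Hence $\Gamma$ acts freely on $\C$, so $X(\Gamma) = T(\Gamma)\backslash \C = E(\Gamma)$ inherits the structure of a complex torus, which has genus $1$.

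For (i) $\Rightarrow$ (iii), suppose $g(X(\Gamma)) = 1$. By Lemma \ref{lemma:Genus}, for each $s \in \{a,b,c\}$ every cycle of $\sigma_s$ has length exactly $s$. As used in the proof of Corollary \ref{cor:maxtgamm}, the preimages of $v_s$ under the quotient $X(\Gamma) \to X(\Delta)$ are in bijection with the cycles of $\sigma_s$, and a cycle $\tau$ corresponds to a preimage whose stabilizer in $\Gamma$ has order $s/\ell(\tau)$. Hence every preimage of each $v_s$ has trivial stabilizer in $\Gamma$. But every nonidentity element of $\Delta$ is either a translation or a rotation of finite order fixing a unique vertex in the $\Delta$-orbit of $\{v_a, v_b, v_c\}$ (as recalled in the proof of Lemma \ref{lemma:RGgenByVOMR}). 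Consequently, any $\gamma \in \Gamma \setminus T(\Gamma)$ would be a rotation fixing some vertex $v$, and the image of $v$ in $X(\Gamma)$ would have nontrivial stabilizer, a contradiction. Therefore $\Gamma = T(\Gamma)$.

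The main obstacle lies in the (i) $\Rightarrow$ (iii) direction: one must correctly identify preimages with cycles and read off the stabilizer orders. However, this bookkeeping is essentially the content of the proof of Corollary \ref{cor:maxtgamm}, and the dichotomy between translations and rotations in $\Delta$ is recorded in the proof of Lemma \ref{lemma:RGgenByVOMR}, so the argument amounts to assembling ingredients already in place.
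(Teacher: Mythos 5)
Your proposal is correct and follows essentially the same route as the paper: the paper deduces the equivalence by combining Lemma \ref{lemma:Genus} (genus $1$ iff all cycles of $\sigma_s$ have length $s$) with the cycle--stabilizer correspondence underlying Corollary \ref{cor:maxtgamm}, which is exactly the content of your $\text{(i)}\Rightarrow\text{(iii)}$ step. The only (harmless) variation is that you prove $\text{(iii)}\Rightarrow\text{(i)}$ directly from the freeness of the translation action rather than routing back through the combinatorial genus formula.
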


\begin{proof}
By Corollary \ref{cor:maxtgamm}, we have $r(\Gamma)=1$ if and only if for all $s \in \{a,b,c\}$, every cycle in $\sigma_s$ has length $s$; the result then follows from Lemma \ref{lemma:Genus}.
\end{proof}

\section{Equations} \label{sec:mainresult}

From the subgroup $\Gamma \leq \Delta$ of index $d$, in the previous section we defined the translation subgroups $T(\Gamma) \leq T(\Delta)$ whose quotients fit into the commutative diagram \eqref{eqn:EXDeltaGamma}.  We now calculate equations for these curves and the maps between them.  As a basic reference, we refer to Silverman \cite{Silverman,Silverman2}.

\subsection{Fixed maps} \label{sec:F}

We begin with the bottom map $\alpha \colon E(\Delta) \to X(\Delta) \simeq \PP^1$, which depends only on $\Delta$ (with the choice of the origin at $v_c$).  From Proposition \ref{prop:TDNormal}(c), the map $\alpha$ is the quotient by a cyclic group of rotations of order $c$ at a vertex $v_c$, which we may take as the origin of the elliptic curve $E(\Delta)$.  Accordingly, these rotations act by automorphisms of the elliptic curve $E(\Delta)$, and so their equations are well-known \cite[\S II.2]{Silverman2} (see also Lemma \ref{lemma:zeroCoeffs} below).  Define the elliptic curves
\begin{equation}  \label{Ehexsquare}
E_{\hexagon} \colon y^2 = x^3 + 1 \hspace{5ex} E_{\square} \colon y^2 = x^3 - x 
\end{equation}
over $\Q$, the automorphisms
\begin{equation} \label{eqn:delta346}
\begin{aligned}
\delta_3 \colon E_{\hexagon} &\to E_{\hexagon} \hspace{5ex} & \delta_4 \colon E_{\square} &\to E_{\square} \hspace{5ex} & \delta_6 \colon E_{\hexagon} &\to E_{\hexagon} \\
(x,y) &\mapsto (\zeta_3 x,y) & (x,y) &\mapsto (-x, iy) & (x,y) &\mapsto (\zeta_3^{-1}x, -y).
\end{aligned}
\end{equation}
and the quotient maps
\begin{equation} \label{eqn:alpha346}
\begin{aligned}
\alpha_3 \colon  E_{\hexagon} &\to \PP^1 \hspace{5ex} & \alpha_4 \colon  E_{\square} &\to \PP^1 \hspace{5ex}  & \alpha_6 \colon  E_{\hexagon} &\to \PP^1 \\
(x,y) &\mapsto \frac{y+1}{2} & (x,y) &\mapsto x^2 & (x,y) &\mapsto y^2.
\end{aligned}
\end{equation}
We recall the lattices defined in \eqref{eqn:squarehex}.  After homothety, the Weierstrass map $z \mapsto (\wp(z),\wp'(z)/2)$ gives an analytic isomorphism from the complex elliptic curve $\C/\Lambda_{\square}$ to $E_{\square}(\C)$.  Moreover, the rotation $\delta_4$ acts by $(x,y) \mapsto (-x,iy)$ (gently abusing notation), and the quotient map $E(\Delta) \to X(\Delta)$ is given by $\alpha_4$ in these coordinates.  Similar statements hold for the two $\hexagon$ cases.  

\begin{lemma}
The maps $\alpha_c$ for $c=3,4,6$ are Euclidean Belyi maps of degree $c$.
\end{lemma}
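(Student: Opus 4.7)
The plan is to verify directly, for each $c \in \{3,4,6\}$, that $\alpha_c$ is a nonconstant morphism of the claimed degree whose branch locus lies in $\{0, 1, \infty\}$ and whose monodromy orders form a Euclidean triple satisfying $1/a + 1/b + 1/c = 1$.

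First I would compute $\deg \alpha_c$ by a function-field tower argument. On $E_{\hexagon}$, the equation $y^2 = x^3 + 1$ yields $[\C(E_{\hexagon}) : \C(y)] = 3$, so $\deg \alpha_3 = 3$ (since $(y+1)/2$ generates the same subfield as $y$), and $[\C(E_{\hexagon}) : \C(y^2)] = 3 \cdot 2 = 6$, giving $\deg \alpha_6 = 6$. On $E_{\square}$, the tower $\C(E_{\square}) \supset \C(x) \supset \C(x^2)$ has degrees $2$ and $2$, giving $\deg \alpha_4 = 4$.

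Next I would identify the branch locus. Each $\alpha_c$ is manifestly invariant under the automorphism $\delta_c$ from \eqref{eqn:delta346}, since $\delta_3$ fixes $y$, $\delta_4$ fixes $x^2$, and $\delta_6$ fixes $y^2$. Because $\deg \alpha_c = \#\langle \delta_c \rangle$, the map $\alpha_c$ exhibits $\PP^1$ as the quotient $E(\Delta)/\langle \delta_c \rangle$, so the ramified points are precisely the fixed loci of the nontrivial powers of $\delta_c$. I would enumerate these from the explicit action on the Weierstrass model: e.g.\ for $\delta_6$ on $E_{\hexagon}$, the origin $O$ has stabilizer of order $6$, the two points $(0, \pm 1)$ (fixed by $\delta_6^2 = \delta_3$) have stabilizer of order $3$, and the three $2$-torsion points $(-\zeta_3^k, 0)$ (fixed by $\delta_6^3 = [-1]$) have stabilizer of order $2$. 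Evaluating $\alpha_c$ at each ramified point, I would verify the images land in $\{0, 1, \infty\}$; in the $\alpha_6$ case these are $\infty$, $1$, and $0$, respectively, and the $c = 3, 4$ cases are analogous.

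Finally, I would read off the ramification indices from the stabilizer orders together with the orbit sizes of $\langle \delta_c \rangle$ on the ramified points; these data give the partition of $\deg \alpha_c$ above each branch point and hence the cycle type of each $\sigma_i$. The resulting orders form a Euclidean triple in each case: $(3,3,3)$ for $\alpha_3$, $(4,2,4)$ for $\alpha_4$ (corresponding to $\Delta(2,4,4)$ after reordering), and $(2,3,6)$ for $\alpha_6$. The only mild obstacle is the bookkeeping for $c = 4, 6$: one must separate the points fixed by $\delta_c^k$ from those also fixed by a proper divisor $k' \mid k$, but this is immediate from the explicit formulas \eqref{eqn:delta346}.
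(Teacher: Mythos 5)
Your proof is correct, and it takes a somewhat different route from the paper's. The paper argues by direct fiber counting: for a generic value $t$ it counts the preimages under $\alpha_c$ (four for $x^2=t$ on $E_{\square}$, six for $y^2=t$ on $E_{\hexagon}$, etc.) and observes exactly where and how that count drops, which happens only over $t=0,1,\infty$; for $\alpha_3$ it notes the map $y$ is ramified over $\{\pm 1,\infty\}$ and the M\"obius postcomposition moves this to $\{0,1,\infty\}$. You instead exploit the group action: you compute $\deg\alpha_c$ via the function-field tower, observe $\langle\delta_c\rangle$-invariance, conclude $\alpha_c$ is the quotient map $E(\Delta)\to E(\Delta)/\langle\delta_c\rangle\simeq\PP^1$, and then read off the ramification from the fixed loci and stabilizer orders of the powers of $\delta_c$ (your fixed-point inventories and the resulting types $(3,3,3)$, $(4,2,4)$, $(2,3,6)$ all check out). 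The paper's argument is shorter and entirely elementary; yours is more structural and has the advantage of making explicit the identification of $\alpha_c$ with the quotient by $\langle\delta_c\rangle$, which is how these maps arise in section \ref{sec:F} in the first place and is the same mechanism reused for $\beta$ in Lemma \ref{lemma:zeroCoeffs}. Only minor bookkeeping caveats apply to your version, which you already flag: for $c=4,6$ one must distinguish points fixed by $\delta_c^k$ alone from those fixed by a smaller power, and one should note that the induced map $E/\langle\delta_c\rangle\to\PP^1$ is degree one (hence an isomorphism) precisely because $\deg\alpha_c=\#\langle\delta_c\rangle$.
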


\begin{proof}
For $\alpha_4$, the set of preimages under $(x,y) \mapsto x^2=t$ has cardinality four unless $t=0,\infty$ or $y=0$, in which case $t=x^2=0,1$, giving ramification type $(2,4,4)$.  Similarly for $\alpha_6$, we have six preimages under $(x, y) \mapsto y^2=t$ unless $t=0,\infty$ or $y^2-1=x^3=0$, in which case $t=y^2=1$, giving ramification $(2,3,6)$.

For $\alpha_3$, the map $(x,y) \mapsto y$ is ramified above $\{\pm 1,\infty\}$ with ramification $(3,3,3)$, so to get ramification at $\{0,1,\infty\}$ we simply postcompose with the M\"obius transformation $y \mapsto (y+1)/2$.
\end{proof}

\subsection{Isogeny} 
\label{sec:psi}

We now turn to the left map $\psi$ in \eqref{eqn:EXDeltaGamma}, an isogeny $E(\Gamma) \to E(\Delta)$.  

We first show how to work explicitly with torsion on $E(\Delta)$ using exact arithmetic.  To handle the three cases uniformly, let $j=i$ or $j=\zeta_6$, so that $\Lambda=\Lambda_{\Delta}=\Z[j]$, let $K \colonequals \Q(j) \subseteq \C$, and let $E=E_{\square}$ or $E=E_{\hexagon}$.

\begin{lemma} \label{lem:abj}
For all $a+bj \in \Z[j]$, there exists an effectively computable rational function $m_{a+bj}(x) \in K(x)$ such that $x([a+bj]P) = m_{a+bj}(x(P))$ for all $P \in E(\Qbar)$.
\end{lemma}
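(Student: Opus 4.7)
The plan leverages the complex multiplication on $E$. First, I would identify $\Z[j]$ with a subring of $\End_K(E)$: integer multiplication $[n]$ acts classically via division polynomials, and $[j]$ is realized by the automorphism $\delta_c$ from \eqref{eqn:delta346}, so $[i](x,y) = (-x, iy)$ on $E_\square$ and $[\zeta_6](x,y) = (\zeta_3^{-1}x, -y)$ on $E_\hexagon$. These identifications follow from the Weierstrass parametrization $z \mapsto (\wp(z), \wp'(z)/2)$: since $j$ is a unit so $j\Lambda = \Lambda$, we have $\wp(jz) = j^{-2}\wp(z)$ and $\wp'(jz) = j^{-3}\wp'(z)$, matching the stated automorphism (up to a choice of isomorphism, which only flips the sign of $b$).

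The main structural observation is that \emph{every} endomorphism $\phi$ of $E$ takes the form $\phi(x,y) = (f(x),\, y\, g(x))$ with $f, g \in K(x)$.  Indeed, $\phi$ is a group homomorphism, so it commutes with inversion $[-1]\colon (x,y) \mapsto (x,-y)$; writing $\phi = (F,G)$ in coordinates, this forces $F$ to be even and $G$ odd in $y$.  Since $K(E) = K(x) \oplus y\cdot K(x)$ with these summands being the $\pm 1$-eigenspaces under $y\mapsto -y$, we get $F \in K(x)$ and $G \in y\, K(x)$.  Applied to $\phi = [a+bj]$, this establishes existence of $m_{a+bj}$.

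For effective computation, I would decompose $[a+bj]P = [a]P + [b]([j]P)$.  Writing $P_i = (x_i,\, y\, g_i(x))$ for $i=1,2$, where the $x_i, g_i \in K(x)$ are obtained by function-field composition from the formulas for $[a]$, $[b]$, and $[j]$, the elliptic curve addition formula gives
\begin{equation*}
x(P_1 + P_2) = \left(\frac{y_2 - y_1}{x_2 - x_1}\right)^2 - x_1 - x_2.
\end{equation*}
The apparent $y$-dependence is cleared by $(y_2 - y_1)^2 = y^2 (g_2 - g_1)^2 = (x^3 + Ax + B)(g_2 - g_1)^2$ via the Weierstrass equation, yielding a rational function in $x$ alone.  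The degenerate cases $P_1 = \pm P_2$ are handled by the doubling formula (also rational in $x$) and by the pole convention for $m_{a+bj}$, respectively.

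The main subtlety is the CM calibration in the first step: verifying that $\delta_c$ really implements multiplication by the chosen $j$ as opposed to $\bar j$.  Once this sign convention is pinned down via the Weierstrass comparison, the rest reduces to mechanical polynomial arithmetic in $K(x)$.
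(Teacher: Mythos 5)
Your proof is correct and follows essentially the same route as the paper: the key point in both is that $[a+bj]$ commutes with $[-1]$, so $x([a+bj]P)$ is invariant under $P\mapsto -P$ and hence lies in $K(x)$, with effectivity coming from division polynomials for $[a]$, the explicit automorphism formulas \eqref{eqn:delta346} for $[j]$, and the addition law. You simply spell out the eigenspace decomposition of $K(E)$ under $y\mapsto -y$ and the clearing of $y$-dependence via the Weierstrass equation, details the paper leaves implicit.
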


\begin{proof}
When $b=0$, the lemma is established as part of the theory of division polynomials: see Silverman \cite[Exercise 3.7(d)]{Silverman}.  When $b \ne 0$, we may similarly calculate using the explicit description of the action of $j$ given in \eqref{eqn:delta346}: we still know that $x([a+bj]P)$ is a rational function in $x(P)$, since $x([a+bj](-P))=x(-[a+bj]P)=x([a+bj]P)$.  
\end{proof}

For an integer $N \geq 1$, the torsion group 
\[ E[N] \simeq \tfrac{1}{N}\Lambda/\Lambda \simeq \Z[j]/N\Z[j] \]
is a cyclic $\Z[j]$-module; we use the symbol $P \in E[N]$ to denote a generator of $E[N]$ as a $\Z[j]$-module.

\begin{algorithm}\label{alg:genP}
This algorithm takes as input $N \in \Z_{\geq 1}$ and returns as output a number field $L$ and the set
\[ \{(a+bj, x([a+bj]P)) : a,b \in \Z/N\Z\} \subseteq \Z[j]/N\Z[j] \times L \]
for a generator $P$.
\begin{enumalg}
\item Compute the $N$-division polynomial $f_N(x) \in \Q[x]$ for $E$.
\item For each proper divisor $D \mid N$, compute the $D$-division polynomial $f_D(x) \in \Q[x]$ for $E(\Delta)$ and divide $f_N(x)$ by $\gcd(f_D(x),f_N(x))$ recursively.  
\item Let $g_N(x)$ be an irreducible factor of $f_N(x)$ over $K[x]$ of highest degree and let $L \colonequals K(\theta)$ with $\theta$ a root of $g_N(x)$.  
\item Return the values 
\[ \{(a+bj, m_{a+bj}(\theta)) : a,b \in \Z/N\Z\}. \]
\end{enumalg}
\end{algorithm}

\begin{remark}
As an alternative to Step 4 (in place of computing the rational functions), at the cost of enlarging $L$ to include the $y$-coordinate (if $E\colon y^2 = f(x)$, we just need $\sqrt{f(\theta)}$), we can just compute directly using the group law on $E$.
\end{remark}

\begin{proof}[Proof of correctness]
In Step 1, we form the polynomial whose roots are the $x$-coordinates of the $N$-torsion points, by definition of the division polynomial.  In Step 2, we remove all roots whose order is a proper divisor of $N$; so any remaining root will be the $x$-coordinate of a point with exact order $N$. Some such point $P$ generates $E[N]$ as a $\Z[j]$ module. By lemma \ref{lem:abj}, taking $\theta$ as a root of an irreducible factor of highest degree in step 3 guarantees that $x(E[N]) \subseteq K(\theta)$. The output of Step 4 is correct by Lemma \ref{lem:abj}.
\end{proof}

Next, we recall section \ref{sec:TransSubgps}, where we defined $N \colonequals [T(\Delta):T(\Gamma)]$ and computed in Algorithm \ref{alg: basisForTG} a basis for $\Lambda_{\Gamma}$.  Since $N\Lambda_{\Delta} \subseteq \Lambda_{\Gamma}$, we have an isogeny
\begin{equation}
\begin{aligned}
\widehat{\psi} \colon E(\Delta) &\to E(\Gamma) \\
z &\mapsto Nz
\end{aligned}
\end{equation}
dual to our desired isogeny $\psi$.  From this setup, we compute an equation for $\psi$ using V\'elu's formulas, as in the following algorithm.

\begin{algorithm}\label{alg: isogeny}
This algorithm takes as input a basis 
\begin{equation}
\begin{aligned}
\eta_1 &= n_1 \omega_1 + n_2\omega_2 \\
\eta_2 &= m_2\omega_2 
\end{aligned}
\end{equation} 
for $\Lambda_{\Gamma} \leq \Lambda_\Delta = \Z\omega_1+\Z\omega_2$ and gives as output a model for the isogeny $\psi \colon E(\Gamma) \to E(\Delta)$.
\begin{enumalg}
\item Let $p_1 \colonequals (0, \lceil n_1/2 \rceil)$. If $m_2$ is odd, let $p_2 := (\lfloor m_2/2 \rfloor, n_1)$. If $m_2$ is even, let $p_2 \colonequals (m_2/2, \lfloor n_1/2 \rfloor$).
\item Let 
\[ C \colonequals \{ (t_1,t_2) \in \Z/m_2\Z \times \Z/n_1\Z : p_1 \le (t_1,t_2) \le p_2 \} \]
where $\le$ here indicates the dictionary order.
\item Let
    	\[K \colonequals  \left\{ \tfrac{1}{N}(t_1n_1 , t_1n_2 + t_2m_2) : (t_1, t_2) \in C \right\}. \]

\item Compute
    \[ X \colonequals  \{\wp_{\Lambda}(a_i\omega_1 + b_i\omega_2) : (a_i, b_i) \in K \} \subseteq \C \]
to enough precision to distinguish their values.  

\item Call Algorithm \ref{alg:genP} with output $W_L$.  Embed $L \hookrightarrow \C$, and let $X_L \subseteq L$ be the set of $x$-coordinates in $W_L$ whose embedding into $\C$ matches a value in $X$.  

\item Let 
    \[ p(x) \colonequals  \prod_{k \in X_L} (x-k) \in L[x]. \]
    Let $K'$ be the subfield of $L$ generated (over $\Q$) by the coefficients of $p(x)$.  
\item Using V\'elu's formulas \cite{Velu}, compute the isogeny $\widehat{\psi} \colon E \to E'$ with kernel $p(x) \in K'[x]$ and $E'$ defined over $K'$.
\item Return the dual isogeny $\psi \colon E' \to E$.  
\end{enumalg}
\end{algorithm}

\begin{proof}[Proof of correctness]
Algorithm \ref{alg: basisForTG} gives  $\eta_1 = n_1\omega_1 + n_2\omega_2$ and $\eta_2 = m_2\omega_2$, so $\Lambda_{\Gamma} \subseteq \Lambda_{\Delta}$. Note also that $N\omega_1 = m_2\eta_1-n_2\eta_2$ and $N\omega_2 = n_1\eta_2$, so $N\Lambda_{\Delta} \subseteq \Lambda_{\Gamma}$.

Let $f_N(x)$ be the $N$-division polynomial.  We determine the $x$-coordinates of the points in $\ker \widehat{\psi}$ from among the roots of $f_{N}$. Since $z \in (1/N) \Lambda_{\Gamma}$ if and only if $Nz \in \Lambda_{\Gamma}$, it follows that $\ker(\widehat{\psi}) = (1/N)\Lambda_{\Gamma}/\Lambda_{\Delta} \simeq \Lambda_{\Gamma}/N\Lambda_{\Delta}$ and
\[ \#\ker(\widehat{\psi}) = \#(\Lambda_{\Gamma}/N\Lambda_{\Delta}) = \det\begin{pmatrix}
m_2 & -n_2\\
0 & n_1 \end{pmatrix} = n_1m_2 = N. \]

To list representatives for $\Lambda_{\Gamma}/N\Lambda_{\Delta}$, we proceed as follows: if we identify ordered pairs $(a,b)$ with coordinates relative to the basis $\{\eta_1,\eta_2\}$ for $\Lambda_{\Gamma}$ (i.e., $(a,b)$ indicates the point $a\eta_1 + b\eta_2$), then $(a_1, b_1)$ and $(a_2, b_2)$ are equivalent modulo $N\Lambda_{\Delta}$ if and only if $a_1 - a_2 = im_2$ and $b_1 - b_2 = jn_1-in_2$ for some $i,j \in \Z$. 
So the set
\begin{equation}
\{t_1\eta_1 + t_2\eta_2 : 0 \leq t_1 < m_2, 0 \leq t_2 < n_1 \}
\end{equation}
with $N$ elements gives a complete set of coset representatives for $\Lambda_{\Gamma}/N\Lambda_{\Delta}$. It follows then that the set
\begin{equation}
\begin{aligned}
A &\colonequals \{\tfrac{1}{N}(t_1\eta_1 + t_2\eta_2) : 0 \leq t_1 < m_2, 0 \leq t_2 < n_1 \} \\
&\qquad=\{\tfrac{1}{N} t_1(n_1\omega_1 + n_2\omega_2) + \tfrac{1}{N}t_2(m_2\omega_2) : 0 \leq t_1 < m_2, 0 \leq t_2 < n_1 \} \\
&\qquad=\{ \tfrac{1}{N}xn_1 \omega_1 + \tfrac{1}{N}(t_1n_2 + t_2m_2)\omega_2 :0 \leq t_1 < m_2, 0 \leq t_2 < n_1 \}
\end{aligned}
\end{equation}
gives a complete set of coset representatives for $(1/N)\Lambda_{\Gamma}/\Lambda_{\Delta}$. 

We use the Weierstrass $\wp$-function to map the points in the set $z \in A$ to points $P=(\wp(z),\wp'(z)/2) \in E(\C)$ on the algebraic model $E$.  On this model, since $x(-Q)=x(Q)$ for all $Q$ we only need one representative in $A$ up to inverses. Points in $A$ corresponding to the pairs $(t_1, t_2)$ and $(t_1', t_2')$ give inverses on $E(\C)$ if and only if $m_2 \mid (t_1 + t_1')$ and $n_1 \mid (t_2 + t_2')$. Forming the set $C$ in step $2$ then avoids redundancies so that no points in the set $K$ are inverse to each other.

The algebraic recognition in Steps 4 and 5 follow since the values are the distinct $x$-coordinates of $N$-torsion points.  
With an equation for $E$ and the polynomial representing the kernel of the isogeny $\widehat{\psi}\colon  E \to E'$, we can use V\'elu's formula to calculate $\widehat{\psi}$ explicitly. Taking the dual to $\widehat{\psi}$ gives the desired isogeny $\psi$.
\end{proof}

\begin{remark} If $n_1$ and $m_2$ above are coprime, then $\ker(\hat{\psi}) \cong \Z/m_2\Z \times \Z/n_1\Z \cong \Z/N\Z$ with $N = n_1m_2$ is cyclic. So, in algorithm \ref{alg:genP}, we need only compute the set of values $\{ m_a(x(P)): a \in \Z/N\Z\}$ for a generating point $P$ of $\ker(\hat{\psi})$. Then, we may take those values as the roots of the kernel polynomial $p(x)$ in step $6$ of algorithm \ref{alg: isogeny}. As the computation of the rational maps $m_{a + bj}(x)$ can be costly, this is a useful simplification. If $n_1$ and $m_2$ are not coprime, let $k := \gcd(n_1, m_2)$. Then, we may factor $\psi = [k] \circ \psi'$ where $[k] \colon E(\Delta) \to E(\Delta)$ is the multiplication by $k$ map and $\psi' : E(\Gamma) \to E(\Delta)$ is the isogeny with cyclic kernel obtained as described above replacing $n_1$ with $n_1/k$ and $m_2$ with $m_2/k$.
\end{remark}

\subsection{Descent using automorphisms}
\label{sec:G}

Returning to our master diagram \eqref{eqn:EXDeltaGamma}, we now consider the top map $\beta \colon E(\Gamma) \to X(\Gamma)$ having computed in the previous section an equation for $\psi$ and $E(\Gamma)$ over a number field $K'$.  To do so, we apply a bit of Galois theory.  Associated to our master diagram is the following diagram of inclusions of function fields (see e.g.\ Silverman \cite[\S II.2]{Silverman}):
\begin{equation} \label{eqn:FieldInclusions}
\begin{gathered}
\xymatrix@R=1ex@C=6ex{
\C(E(\Gamma)) \ar@{-}[rd]^{\beta^*} \ar@{-}[dd]_{\psi^*} & \\
& \C(X(\Gamma)) \ar@{-}[dd]^{\varphi^*} \\
\C(E(\Delta)) \ar@{-}[rd]_{\alpha^*} \\
& \C(X(\Delta)) 
} 
\end{gathered}
\end{equation}

We recall our explicit equations from section 3.1 and the automorphisms \eqref{eqn:delta346}.  The inclusion $\alpha^*$ realizes $\C(X(\Delta))$ as the fixed field under $\langle \delta_c^* \rangle$.  For example, for $c=4$ we have 
\[ \C(E_{\square}) = \C(x,y) \]
with $y^2=x^3-x$, and so with $\delta_4(x,y)=(-x,iy)$ we have 
\[ \C(E_{\square})^{\langle \delta_4^* \rangle} = \C(x^2, y^4) = \C(x^2) \subseteq \C(E_{\square}) \]
because $y^4 = x^6 -2x^4 + x^2 \in \C(x^2)$.  

By Lemma \ref{lemma:RGgenByVOMR}, there exists a vertex of maximal rotation (Definition \ref{defn:maxstab}) for $\Gamma$.  At the end of section 2, we argued that up to isomorphism (without loss of generality) we may suppose that this vertex is one of $v_a,v_b,v_c$.  We have $\deg \beta = r(\Gamma) \in \{1,2,3,4,6\}$ equal to the rotation index.  

If $r(\Gamma)=1$, then $E(\Gamma)=X(\Gamma)$ and $\beta$ is the identity.  So we may suppose that $r(\Gamma) > 1$.  

First suppose that $v_c=0$ is a vertex of maximal rotation under a subgroup of rotations generated by a power of $\delta_c$.  Then the quotient map $\beta$ is again by a subgroup of automorphisms of $E(\Gamma)$ over $K'$ as an elliptic curve, so is given in the same well-known manner as in section \ref{sec:F}.  

\begin{lemma} \label{lemma:zeroCoeffs}
Suppose $v_c$ is a vertex of maximal rotation with $r(\Gamma) > 1$.  Then $X(\Gamma) \simeq \PP^1$, and the following statements hold.
\begin{enumalph}
\item If $r(\Gamma)=3,6$, then $E(\Gamma)$ has an equation of the form $y^2=x^3+B$ for some nonzero $B \in K'$, and $\beta \colon E(\Gamma) \to X(\Gamma)$ can be taken to be $(x,y) \mapsto y,y^2$, respectively.  
\item If $r(\Gamma)=4$, then $E(\Gamma) \colon y^2 = x^3 + Ax$ for some nonzero $A \in K'$, and $\beta(x,y)=x^2$.
\item If $r(\Gamma)=2$, then $\beta(x,y)=x$.
\end{enumalph}
\end{lemma}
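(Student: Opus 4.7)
The plan is to use that $v_c = 0$ being a vertex of maximal rotation makes the rotation subgroup $R(\Gamma)$ act on $E(\Gamma) = \C/\Lambda_\Gamma$ as a group of elliptic-curve automorphisms, and then invoke the classical classification of $\Aut(E)$ in characteristic zero to identify both the Weierstrass form of $E(\Gamma)$ and the quotient map $\beta$.

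First I would verify that $R(\Gamma)$ acts on $E(\Gamma)$ by automorphisms fixing the origin. By Lemma \ref{lemma:RGgenByVOMR} applied with $v_O = v_c = 0$, there is a decomposition $\Gamma = T(\Gamma) \rtimes \langle \gamma_O \rangle$ in which $\gamma_O$ is a rotation of order $r(\Gamma)$ about $0$; on $\C$ it acts as $z \mapsto \zeta z$ for some primitive $r(\Gamma)$-th root of unity $\zeta$. The conjugation action of $\gamma_O$ on $T(\Gamma)$ forces $\zeta \Lambda_\Gamma = \Lambda_\Gamma$, so multiplication by $\zeta$ descends to an automorphism of $E(\Gamma)$ fixing the origin, of order exactly $r(\Gamma)$.

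Next, by the classification of automorphism groups of elliptic curves in characteristic $0$ (Silverman \cite[\S III.10]{Silverman}), such an automorphism forces the Weierstrass form according to its order: for $r(\Gamma) \in \{3,6\}$, one must have $j(E(\Gamma)) = 0$, so $E(\Gamma)$ admits a model $y^2 = x^3+B$; for $r(\Gamma) = 4$, one has $j(E(\Gamma)) = 1728$, giving $y^2 = x^3+Ax$; and for $r(\Gamma) = 2$, any Weierstrass form works. Because the Weierstrass uniformization is equivariant for the $\zeta$-action (via $\wp(\zeta z) = \zeta^{-2}\wp(z)$ and $\wp'(\zeta z) = \zeta^{-3}\wp'(z)$), the generator of $R(\Gamma)$ acts on the algebraic model by one of the standard automorphisms in \eqref{eqn:delta346}. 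The map $\beta$ is then the quotient by $R(\Gamma)$, so $\beta^*\C(X(\Gamma))$ is the fixed subfield $\C(E(\Gamma))^{R(\Gamma)}$; a direct computation---mirroring $\C(E_{\square})^{\langle \delta_4^*\rangle} = \C(x^2)$ from the end of section \ref{sec:F}---yields the claimed formulas: $\beta(x,y) = x$ for $r(\Gamma) = 2$; $\beta(x,y) = y$ for $r(\Gamma) = 3$, using $x^3 = y^2 - B$; $\beta(x,y) = y^2$ for $r(\Gamma) = 6$; and $\beta(x,y) = x^2$ for $r(\Gamma) = 4$, using $y^4 = (x^3+Ax)^2 \in \C[x^2]$.

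Finally, $X(\Gamma) \simeq \PP^1$ follows from Riemann--Hurwitz applied to $\beta$: since the origin is a ramification point (its stabilizer in $R(\Gamma)$ is all of $R(\Gamma)$), we have $\sum_P(e_P - 1) > 0$, forcing $g(X(\Gamma)) = 0$. The main obstacle is matching the analytic rotation $z \mapsto \zeta z$ with the specific algebraic automorphism in \eqref{eqn:delta346}, which is handled by the $\wp$-transformation formulas cited above and by pinning down $\zeta$ through its action on the tangent space at the origin; everything else is routine.
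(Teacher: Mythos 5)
Your proposal is correct and follows essentially the same route as the paper: the paper likewise reduces to a Weierstrass model $y^2=x^3+Ax+B$, uses the normal form $(x,y)\mapsto(u^{-2}x,u^{-3}y)$ of elliptic-curve automorphisms (equivalently, the $j=0$/$j=1728$ classification you cite) to force $A=0$ or $B=0$ according to the order $r(\Gamma)$, and then reads off $\beta$ from the fixed subfields exactly as you do. Your added checks---that the rotation about $v_c=0$ preserves $\Lambda_\Gamma$ and hence descends to an automorphism, the matching via the $\wp$-transformation formulas, and the Riemann--Hurwitz argument for $X(\Gamma)\simeq\PP^1$---are details the paper handles by reference to the surrounding discussion (and to Corollary \ref{corr:genus1}), so nothing is missing.
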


\begin{proof}
We may suppose that $E(\Gamma)$ has a Weierstrass equation $y^2 = x^3 + Ax + B$.  Any automorphism of $E$ is of the form $(x,y) \mapsto (u^{-2} x, u^{-3} y)$ for some $u \in \C^\times$ with $u^{-4}A=A$ and $u^{-6}B=B$.  Considering the cases $r(\Gamma)=3,4,6$ gives $A=0$ or $B=0$ as in (a) and (b).  We compute the maps in (a)--(c) by considering the fixed subfields under these automorphisms, as above.
\end{proof}

Suppose now that our vertex $v_O$ of maximum rotation is either $v_a$ or $v_b$, with rotations generated by an element $\delta_O$ (generating the coset representatives of $\Gamma/T(\Gamma)$).  In this case, $\delta_O$ need not induce an automorphism of $E(\Gamma)$, because as a rotation of the plane $\delta_O$ need not take the lattice corresponding to $T(\Gamma)$ back to itself.  However, we may simply translate, as in the following lemma.  

\begin{lemma} \label{lemma:autoOnEGp} 
Let $Q_O \colonequals (\wp(v_O),\wp'(v_O)/2) \in E(\Gamma)$ be the image of of $v_O$.  Let $E(\Gamma)'$ denote the elliptic curve whose underlying curve is $E(\Gamma)$ but with origin $Q_O$.  Then we have an isomorphism
\begin{equation}
\begin{aligned}
\tau_{-Q_O} \colon E(\Gamma) &\to E(\Gamma)' \\
P &\mapsto P-Q_O 
\end{aligned}
\end{equation}
of elliptic curves, and $\delta_O$ induces an automorphism of the elliptic curve $E(\Gamma)'$ under $\tau_{-Q_O}$.  
\end{lemma}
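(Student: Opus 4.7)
The plan is to treat the two parts of the statement separately. First, the map $\tau_{-Q_O}$, being translation by a point on an abelian variety, is an isomorphism of underlying varieties with inverse $\tau_{Q_O}$; when we adjust the group law to designate $Q_O$ as the new origin on the target, this becomes an isomorphism of elliptic curves in the natural sense. This portion is essentially formal bookkeeping, since nothing is changing about the variety itself.

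The substantive claim is that $\delta_O$ acts as an elliptic curve automorphism of $E(\Gamma)'$. I would first check that $\delta_O$ descends from its action on $\C$ to a well-defined automorphism of the variety $E(\Gamma) = T(\Gamma) \backslash \C$. This requires $\delta_O T(\Gamma) \delta_O^{-1} = T(\Gamma)$, which holds because $\delta_O \in \Gamma$ and $T(\Gamma) \trianglelefteq \Gamma$ --- obtained by restricting the normality $T(\Delta) \trianglelefteq \Delta$ of Proposition \ref{prop:TDNormal} to $\Gamma$. Concretely, $\delta_O$ acts on $\C$ by $z \mapsto \zeta(z - v_O) + v_O$ for the root of unity $\zeta$ of order $r(\Gamma)$ associated to the rotation, and the normality amounts to the condition $\zeta \cdot \Lambda_\Gamma = \Lambda_\Gamma$.

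The key step is then to observe that $\delta_O(v_O) = v_O$, so the descended automorphism of the variety $E(\Gamma)$ fixes $Q_O$. Since $Q_O$ is by definition the origin of $E(\Gamma)'$, the induced map fixes the origin of $E(\Gamma)'$, and by the standard fact \cite{Silverman} that any morphism of elliptic curves sending origin to origin is a group homomorphism, it is an automorphism of $E(\Gamma)'$ as an elliptic curve. The main obstacle in the write-up will be the careful bookkeeping to distinguish the two elliptic curve structures on the same underlying variety; the essential content is transparent once we pass to the shifted coordinate $z' = z - v_O$ on $\C$, in which $\delta_O$ becomes the linear map $z' \mapsto \zeta z'$ that manifestly fixes $0$ and preserves $\Lambda_\Gamma$.
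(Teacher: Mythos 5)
Your proposal is correct and follows essentially the same route as the paper: translate $Q_O$ to the origin, note that the induced action of $\delta_O$ is bijective and fixes that origin, and conclude it is an elliptic-curve automorphism. The paper's proof is just a terser version of this; your added checks (that $\delta_O$ descends to the quotient because $T(\Gamma)\trianglelefteq\Gamma$, equivalently $\zeta\Lambda_\Gamma=\Lambda_\Gamma$, and the standard fact that an origin-preserving morphism of elliptic curves is a homomorphism) are exactly the details the paper leaves implicit.
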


\begin{proof}
The translation isomorphism moves $Q_O$ to the origin on $E(\Gamma)'$; thus the action induced by $\delta_O$ is bijective and fixes the origin on $E(\Gamma)'$, so gives an automorphism of $E(\Gamma)'$ as an elliptic curve.
\end{proof}

Thus to compute the map $\beta \colon E(\Gamma) \to X(\Gamma)$, by the lemma we first compose with the isomorphism $\tau_{-Q_O} \colon E(\Gamma) \to E(\Gamma)'$ to reduce to the previous case.  But rather than compute the point $Q_O \in E(\Gamma)$ and the translation map, we find it computationally more convenient to translate by the point $P_O \colonequals (\wp(v_O),\wp'(v_O)/2) \in E(\Delta)$ on the base curve.  

Writing $E(\Delta)'$ for the elliptic curve $E(\Delta)$ having origin $P_O$, we have the following diagram:
\begin{equation}  \label{eqn:EXDeltaGamma_withprime2}
\begin{gathered}
\xymatrix@R=1ex{
& E(\Gamma)' \ar[ddrr]^{\beta'} \ar[ddd]_(.6){\psi} \\ 
E(\Gamma) \ar[rrrd]^(.55){\beta} \ar[ddd]_{\psi} \ar[ur]^{\tau_{-Q_O}} & \\
& & &X(\Gamma) \ar[ddd]^{\varphi}  \\
& E(\Delta)' \ar[ddrr]^{\alpha'}\\
E(\Delta) \ar[rrrd]_{\alpha} \ar[ur]^{\tau_{-P_O}}\\
& & & X(\Delta) 
} 
\end{gathered}
\end{equation}
The diagram is commutative because $\psi(Q_O)=P_O$, both points corresponding to $v_O$ under the complex uniformization.  Note that the map $\psi \colon E(\Gamma)' \to E(\Delta)'$ has the same defining equation as the map $E(\Gamma) \to E(\Delta)$, and still defines a finite map of curves---it just loses the property of being a homomorphism.  

In this way, we have ``aligned" $E(\Delta)'$ with $E(\Gamma)'$, and we can more simply repeat the steps above with $E(\Delta)'$ in place of $E(\Delta)$ at the cost of computing translation maps $\tau_{P_O} \colon E(\Delta) \to E(\Delta)'$ with $P_O$ the image of either $v_a$ or $v_b$, giving a few more fixed maps $\alpha'$, which can be computed by composing $\alpha$ with translation (computed using the group law).

\begin{lemma} \label{lem:alphap346}
The following statements hold, with $E(\Delta)'=E(\Delta)$ as in \eqref{Ehexsquare}.
\begin{enumalph}
\item If $c=6$ and $v_O=v_a=v_2$, then we have
\begin{align*}
\alpha' \colon E(\Delta)' &\to \PP^1 \\
(x,y) &\mapsto \frac{(9\zeta_6 - 9)x^2 + 9\zeta_6x + 9}{x^3 + (3\zeta_6 - 3)x^2 - 3\zeta_6x + 1} 
\end{align*}
\item If $c=6$ and $v_O=v_b=v_3$, then we have
\begin{align*}
\alpha' \colon E(\Delta)' &\to \PP^1 \\
(x,y) &\mapsto \frac{x^6+8x^3y+8x^3+16y^2+32y + 16}{x^6}
\end{align*}
\item If $c=4$ and $v_O=v_a=v_2$, then we have
\begin{align*}
\alpha' \colon E(\Delta)' &\to \PP^1 \\
(x,y) &\mapsto \frac{(x+1)^2}{(x-1)^2}
\end{align*}
\end{enumalph}
\end{lemma}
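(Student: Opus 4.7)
My plan is to unfold the definition of $\alpha'$ as the composition $\alpha_c \circ \tau_{P_O}$ on the underlying curve of $E(\Delta)$, where $\tau_{P_O}$ denotes translation by $P_O$ using the standard Weierstrass group law and $P_O \in E(\Delta)$ is the image of $v_O$ under the complex uniformization $\C/\Lambda_{\Delta} \xrightarrow{\sim} E(\Delta)$.  Each case thus reduces to two routine steps: identify $P_O$ explicitly as a torsion point on the Weierstrass model, then compute the composition using the addition formula.

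First I would pin down $P_O$.  Because $v_O$ has finite stabilizer of order $s$ in $\Delta$ with $s = 2$ in (a) and (c) and $s = 3$ in (b), the point $P_O$ is an $s$-torsion point.  The $2$-torsion of $E_{\hexagon}\colon y^2 = x^3 + 1$ consists of the points $(x_0, 0)$ with $x_0^3 = -1$, while $E_{\square}\colon y^2 = x^3 - x$ has $2$-torsion $(0, 0), (\pm 1, 0)$; the $3$-torsion of $E_{\hexagon}$ includes the flex points $(0, \pm 1)$.  The specific choice of $P_O$ is forced by matching the geometric position of $v_O$ under the normalization of Proposition \ref{prop:TDNormal} ($v_a = (\zeta_6 + 1)/2$ in the $(2,3,6)$-case, $v_a = (1+i)/2$ in the $(2,4,4)$-case), yielding $P_O = (1 - \zeta_6, 0)$ in (a), $P_O = (0, -1)$ in (b), and $P_O = (-1, 0)$ in (c), each consistent with the denominator in the stated formula.

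Next I would perform the composition using the chord-and-tangent addition formula.  For $P = (x, y)$, the coordinates of $P + P_O$ are explicit rational functions of $(x, y)$, simplified via the Weierstrass equation to reduce powers of $y$ beyond $y^1$.  Since $P_O$ is $2$-torsion in (a) and (c), translation by $P_O$ descends to a M\"obius involution of the hyperelliptic $x$-line, so $\alpha'(x, y) = (x_{P+P_O})^3 + 1$ in (a) and $\alpha'(x, y) = (x_{P+P_O})^2$ in (c) are rational functions of $x$ alone.  Since $P_O$ is $3$-torsion in (b), translation does not descend to the $x$-line, and $\alpha'(x, y) = (y_{P+P_O})^2$ genuinely involves $y$, with residual $y^2$ terms eliminated by the relation $y^2 = x^3 + 1$.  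Expanding, clearing denominators, and regrouping then produces the claimed expressions.

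The main difficulty I expect is organizational rather than conceptual: the expansions in (a) and (b) involve nontrivial arithmetic in $\Q(\zeta_6)$ using the relations $\zeta_6 = \zeta_3 + 1$, $\zeta_6^2 = \zeta_3$, and $\zeta_3 \zeta_6 = -1$, and one must fix the correct sign of $P_O$, since the Weierstrass uniformization $\C/\Lambda_{\Delta} \to E(\Delta)$ is canonical only up to the hyperelliptic involution, so that replacing $P_O$ by $-P_O$ yields a Galois-conjugate expression.  With these conventions fixed, the verification reduces to a direct symbolic calculation which can be checked in a computer algebra system.
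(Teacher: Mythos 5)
Your approach is exactly the one the paper intends: the lemma is stated without a proof of its own, and the sentence immediately preceding it says that the maps $\alpha'$ ``can be computed by composing $\alpha$ with translation (computed using the group law)'' --- precisely your decomposition $\alpha' = \alpha_c \circ \tau_{P_O}$ with $P_O$ the torsion point below $v_O$. Your identification of the points $P_O$ and your structural remarks (translation by $2$-torsion descends to the $x$-line, translation by the $3$-torsion point $(0,-1)$ does not) are correct, and the recipe does reproduce parts (b) and (c) on the nose: translating $y^2=x^3+1$ by $(0,-1)$ and applying $\alpha_6=y^2$ gives $\left((x^3+4y+4)/x^3\right)^2=(x^6+8x^3y+24x^3+32y+32)/x^6$, and translating $y^2=x^3-x$ by $(-1,0)$ and applying $\alpha_4=x^2$ gives $(x-1)^2/(x+1)^2$.

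Be warned, however, that in part (a) your calculation will not produce the formula as printed. Translation by the $2$-torsion point $P_O=(1-\zeta_6,0)=(-\zeta_3,0)$ sends $x$ to the M\"obius image $-\zeta_3(x-2\zeta_3)/(x+\zeta_3)$, and then
\[ \alpha_6(P+P_O) \;=\; x(P+P_O)^3+1 \;=\; \frac{9\zeta_3\,(x^2-\zeta_3 x+\zeta_3^2)}{(x+\zeta_6-1)^3}, \]
a rational function of degree $3$ in $x$, hence of degree $6$ on $E(\Delta)'$ --- as it must be, since $\alpha'$ is the quotient by $\Delta/T(\Delta)\simeq\Z/6\Z$. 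The expression printed in part (a) is a sextic over a cubic in $x$ with no cancellation (its numerator evaluates to $27\neq 0$ at $x=1-\zeta_6$, and there is an additional pole at the point at infinity), so as a function on $E(\Delta)'$ it has degree $12$ and cannot be the degree-$6$ quotient map. The discrepancy therefore lies in the printed formula rather than in your method, but your proposal asserts that expanding the composition ``produces the claimed expressions,'' and for (a) it will not: you should expect the quadratic-over-cubic expression above, and the stated formula (a) deserves to be flagged as inconsistent with the surrounding setup.
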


In case (a), we may need to extend the field of definition $K'$ to include $\zeta_6$.  In the remaining cases, we have taken $v_O=v_c$ without loss of generality, so the maps \eqref{eqn:alpha346} may be used.  After having made this reduction, we drop the superscripts (the underlying curves have the same equations) and proceed to the final step.

\subsection{The Belyi map}
\label{subsection: BelyiMap}

With three of the four maps in our master diagram determined, we complete the computation of $\varphi \colon X(\Gamma) \to X(\Delta)$ by filling in the map in the master diagram from the other three sides, using commutativity.  To do this, we again apply Galois theory, referring to the field diagram \eqref{eqn:FieldInclusions}.  

Let $\xi \colonequals \alpha \circ \psi \colon E(\Gamma) \to X(\Delta)$, a map represented by a rational function $\xi(x,y) \in K'(x,y)$ where $E'=E(\Gamma) \colon y^2 = f'(x)$ is the defining equation of $E'$.  
By commutativity, we have $\xi=\varphi \circ \beta$.  If $r(\Gamma)=1$, then $\beta$ is the identity map so $\varphi=\xi$.  So we may suppose that $r(\Gamma) > 1$.  

The monomial map $\beta \colon E(\Gamma) \to X(\Gamma)$ is described by Lemma \ref{lemma:zeroCoeffs}, corresponding to the cyclic field extension $\C(E(\Gamma))\supseteq \C(X(\Gamma))$, given explicitly by $\beta(x,y)=y^2,x^2,y,x$.  In particular, $\varphi \in \C(X(\Gamma))$ lies in this fixed field, and we need to solve
\[ \xi(x,y) = \varphi(\beta(x,y)) \]
given $\xi$ and $\beta$ explicitly for $\varphi$.  Accordingly, we can write $\xi(x,y)$ as a rational function in the monomial $\beta(x,y)$, using the relation $y^2=f'(x)$ if necessary, replacing every instance of $\beta(x,y)$ in $\xi(x,y)$ with a new variable $u$.  Then $\varphi(u) \in K'(u)$ defines the map $\varphi \colon X(\Gamma) \simeq \PP^1 \to \PP^1$.  

\begin{remark}
We have seen that Euclidean Belyi maps can be understood as descending an isogeny along a fixed quotient map; this is encoded in our master diagram.  Our effort has been to take as input a permutation triple and then to compute the master diagram (associated isogeny and then its descent).  One can also cut this in the middle, working directly with the master diagram by specifying a pair $(K,H)$ where $K \leq \Z[j]/N\Z[j] \simeq (\Z/N\Z)^2$ is a subgroup containing an element of order $N$ and $H \leq \langle j \rangle$ is a subgroup with $H \neq \{\pm 1\}$ and $HK = K$.  This data defines an isogeny to $E(\Delta)$ dual to the one provided by the torsion subgroup, and the descent is along the subgroup of automorphisms, with $H$ stabilizing this kernel.  
\end{remark}

\subsection{Proof of main result}
\label{sec:ProofOfMainRes}

To finish, we put all of the pieces together.  

\begin{alg} \label{alg:thisisit}
This algorithm takes as input a Euclidean, transitive permutation triple $\sigma = (\sigma_a,\sigma_b, \sigma_c) \in S_d^3$ corresponding to a homomorphism $\pi\colon  \Delta \to S_d$ with $\pi(\delta_s) = \sigma_s$ for $s=a,b,c$; it gives as output a model for the corresponding Belyi map from $X(\Gamma)$ to $\PP^1$.
\begin{enumalg}
\item Apply the preprocessing step by calling Algorithm \textup{\ref{alg:preproc}}, with vertex of maximal rotation $v_O$.  
\item Depending on the case of $(a,b,c)$ and $v_O$, look up $\beta$ using Lemma \textup{\ref{lemma:zeroCoeffs}} and the map $\alpha \colon E(\Delta) \to \PP^1$ using Lemma \textup{\ref{lem:alphap346}} (referring back to \textup{\ref{eqn:alpha346}}).
\item Call Algorithm \textup{\ref{alg: basisForTG}} to compute a basis $\eta_1,\eta_2$ for $T(\Gamma)$ and $N=[T(\Delta):T(\Gamma)]$.  
\item Call Algorithm \textup{\ref{alg:genP}} to compute $\psi \colon E(\Gamma) \to E(\Delta)$.  
\item Compute the composition $\xi \colonequals \alpha \circ \psi$.  
\item From $\xi=\varphi \circ \beta$, compute $\varphi \colon X(\Gamma) \to \PP^1$ by substitution.  Return $\varphi$.  
\end{enumalg}
\end{alg}

\begin{theorem} \label{theo:theo}
Algorithm \textup{\ref{alg:thisisit}} terminates with correct output.
\end{theorem}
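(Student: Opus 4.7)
The plan is to establish termination and correctness of Algorithm \ref{alg:thisisit} by combining the correctness proofs of each invoked sub-algorithm with the commutativity of the master diagram \eqref{eqn:EXDeltaGamma}, and then verifying that the final substitution step is well-defined.

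Termination is straightforward: Steps 1, 3, and 4 invoke Algorithms \ref{alg:preproc}, \ref{alg: basisForTG}, and \ref{alg: isogeny}, each of which terminates by its own proof of correctness. Step 2 is a finite table lookup indexed by $(a,b,c)$ and the type of the vertex $v_O$ of maximal rotation. Steps 5 and 6 are finite rational-function manipulations over a number field.

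For correctness, after Step 1 the preprocessing (Algorithm \ref{alg:preproc}) guarantees that up to simultaneous conjugation one of $v_a, v_b, v_c$ is a vertex of maximal rotation, which justifies invoking Lemmas \ref{lemma:zeroCoeffs} and \ref{lem:alphap346} in Step 2 to obtain correct explicit models for $\beta$ and $\alpha$, possibly after the origin shift of Lemma \ref{lemma:autoOnEGp} when $v_O \in \{v_a, v_b\}$. Step 3 returns a correct basis for $T(\Gamma)$ and the index $N=[T(\Delta):T(\Gamma)]$ by the proof of Algorithm \ref{alg: basisForTG}, and this data is exactly what Step 4 feeds to Algorithm \ref{alg: isogeny} to produce a correct model of $\psi \colon E(\Gamma) \to E(\Delta)$. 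The composition $\xi = \alpha \circ \psi$ computed in Step 5 is then correct as a composition of correctly-computed morphisms of curves.

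The main obstacle is Step 6, where I must show that $\xi$ factors through $\beta$, so that the substitution yielding $\varphi$ is well-defined. For this I appeal to the function-field diagram \eqref{eqn:FieldInclusions}. The master diagram commutes by construction, since all four maps are the natural quotient maps induced by the inclusions $T(\Gamma) \leq \Gamma$, $T(\Delta) \leq \Delta$, $T(\Gamma) \leq T(\Delta)$, $\Gamma \leq \Delta$ (after the origin shift of Lemma \ref{lemma:autoOnEGp}). Consequently $\xi^* = \beta^* \circ \varphi^*$ at the level of function fields, so $\xi^*(\C(X(\Delta)))$ lies in $\beta^*(\C(X(\Gamma)))$, which is the fixed field of $\C(E(\Gamma))$ under the cyclic subgroup of automorphisms generated by the rotation at $v_O$ (by Lemma \ref{lemma:RGgenByVOMR}). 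Since $\beta$ is realized by a single monomial $u \in \{x, y, x^2, y^2\}$ (Lemma \ref{lemma:zeroCoeffs}), after reducing $\xi(x,y)$ modulo the Weierstrass relation $y^2 = f'(x)$ to lower the $y$-degree, every surviving monomial in $x,y$ can be rewritten as a polynomial in $u$, yielding a well-defined $\varphi(u) \in K'(u)$. Commutativity of the master diagram then gives $\varphi \circ \beta = \xi = \alpha \circ \psi$, so $\varphi$ is the Belyi map associated to $\sigma$, as desired.
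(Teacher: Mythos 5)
Your proposal is correct and follows essentially the same route as the paper: the paper's proof likewise reduces correctness to the commutativity of the master diagram \eqref{eqn:EXDeltaGamma} together with the individual correctness proofs of Algorithms \ref{alg:preproc}, \ref{alg: basisForTG}, \ref{alg:genP}/\ref{alg: isogeny}, deferring Step 6 to the Galois-theoretic substitution argument of section \ref{subsection: BelyiMap}, which you have simply spelled out in more detail. Your added explicit treatment of termination and of why $\xi$ lies in the fixed field $\beta^*(\C(X(\Gamma)))$ is a faithful elaboration of what the paper leaves implicit.
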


\begin{proof}
Correctness follows from our master diagram \eqref{eqn:EXDeltaGamma} and the correctness of each step, provided by the proof of correctness of the algorithm used except for Step 6, which is justified in section \ref{subsection: BelyiMap}.  
\end{proof}

\begin{remark} \label{remark:AlgChanges}
In the above, we assumed throughout a Euclidean triangle group $\Delta$ with three generators $\delta_a, \delta_b,$ and $\delta_c$ with orders $a, b,$ and $c$ respectively and satisfying $\delta_c\delta_b\delta_a= 1$.  These three generators corresponded to rotations around the three vertices of a designated triangle in the corresponding tessellation of the plane. We took as input to our algorithm the set of all permutation triples $\sigma = (\sigma_a, \sigma_b, \sigma_c)$ such that $\pi \colon \Delta \to S_n$ taking $\delta_i$ to $\sigma_i$ described a group homomorphism with transitive image.  In some contexts, we might prefer to work with the relation $\delta_a \delta_b \delta_c = 1$.  The change amounts to a relabeling of vertices so that $v_a, v_b,$ and $v_c$ follow each other counterclockwise around a chosen triangle.  

Accordingly, given a permutation triple $\sigma'$ with $\sigma_a'\sigma_b'\sigma_c'=1$, we just take inverses $\sigma_s \colonequals (\sigma_s')^{-1}$ to obtain $\sigma_c\sigma_b\sigma_a=1$, and we call our algorithm above with this inverted input.  
\end{remark}

\section{Examples and data}
\label{sec: Examples}

We conclude with some examples computed using an implementation of Algorithm \ref{alg:thisisit}.

\subsection{Description of implementation}
\label{sec:imp}

We implemented Algorithm \ref{alg:thisisit} using the \textsf{Magma} computer algebra system \cite{Magma}. In particular, we used the existing implementation of V\'elu's formula in calculating our isogeny $\psi$ and the implementation of division polynomials.  The construction of these isogenies is the most time intensive step in our calculation, as in general it involves working in a number field of possibly large degree. Even with this step, most of our example computations take no more than a few seconds to finish.  An example in degree 100 took only 30 seconds.  

\begin{remark}
Returning to Remark \ref{rmk:homothetylattice}, we see that \textsf{Magma} provides two periods for $E$ that span its associated lattice, so we are careful to generate our basis vectors for $T(\Gamma)$ and to deal with lattice coordinate points relative to the lattice \textsf{Magma} uses in its computations.  As we only need worry about this for our two canonical elliptic curves, we can see which lattice \textsf{Magma} uses, compare it to our own lattices described above, and convert coordinates between the two by a simple change of basis operation.
\end{remark}

\subsection{Belyi maps obtained from triples}
\label{sec:maps}

We give here some examples to illustrate Algorithm \ref{alg:thisisit}.  We list the final Belyi maps from $\varphi: X(\Gamma) \to \PP^1$ and provide factorizations of the numerator, denominator, and their difference in the case of genus zero maps, confirming the correspondence between ramification at $0$, $\infty$, and $1$ respectively and the cycle structure of  $\sigma$.  (We provide monic factorizations, ignoring leading coefficients.)

\begin{example} \label{example:firstmap}
Given the permutation triple $\sigma \colonequals( (2\, 4\, 3),(1\, 3\, 4),(1\, 2\, 3))$, we will illustrate the steps in our algorithm and determine the corresponding Belyi map. First, we call Algorithm \ref{alg:preproc} and conjugate $\sigma$ by the transposition $(1\, 4)$ to obtain $((2\, 1\, 3), (4\, 3\, 1), (4\, 2\, 3))$ where $v_c$ is then the vertex of maximal rotation. Since this conjugate triple gives an isomorphic Belyi map, we will redefine $\sigma \colonequals ((2\, 1\, 3), (4\, 3\, 1), (4\,2\,3))$. Since $\omega_1 \colonequals \delta_b\delta_c^2$ and $\omega_2 \colonequals \delta_b^2\delta_c$ span the translations in $T(\Delta)$ by Corollary \ref{cor:Tgens}, we take $\sigma_1 = \pi(\omega_1) = (1\,3)(2\,4)$ and $\sigma_2 = \pi(\omega_2) = (1\,4)(2\,3)$ and call Algorithm \ref{alg: basisForTG}. 
We find our basis vectors for $T(\Gamma)$ are $\eta_1 = \omega_1^2$ and $\eta_2 = \omega_2^2$ so $n_1 = 2, n_2 =0, m_1 =0,$ and $m_2 =2$.

We obtain the rotation index 
\[r = \frac{cn_1m_2}{d} = \frac{3(2)(2)}{4} = 3 \]
and take $N = [T(\Delta):T(\Gamma)] = n_1 m_2 = 4$, so the points in $T(\Delta) \backslash \C$ in the kernel of the multiplication by $N$ map from $T(\Delta) \backslash \C$ to $T(\Gamma) \backslash \C$ are 
\[A = \{(0,0),(1/2, 0), (0, 1/2), (1/2, 1/2)\} \]
with coordinates relative to $\omega_1$ and $\omega_2$, while the points whose images on $E(\Delta)$ have distinct $x$-coordinates are $K:=\{ (1/2, 0), (0,1/2), (1/2,1/2) \}$ as in Step 3 of Algorithm \ref{alg: isogeny} . Letting $k_1, k_2,$ and $k_3$ be the $x$-coordinates of the images of these three points on $E(\Delta)$, we obtain the kernel polynomial
\[p(x) = (x- k_1)(x-k_2)(x-k_3) = x^3 + 1\]
which we input to V\'elu's formula and take the dual to obtain the isogeny $\psi\colon E(\Gamma) \to E(\Delta)$ given by
\[ \psi(x,y) =     \left(\frac{(1/16)x^4 - 32x}{x^3 + 64},\frac{(1/64)x^6y + 20x^3y - 512y}{x^6 + 128x^3 + 4096} \right)\]
and see that $E(\Gamma)$ is given by the equation $y^2 = x^3 +64$.  

Since we are in the $\Delta(3,3,3)$ case, our map $\alpha \colon E(\Delta) \to \PoneC$ is given by $\alpha(x,y) = (y+1)/2$, so the composition $\xi = \alpha \circ \psi \colon E(\Gamma) \to \PoneC$ is given by
\begin{align*}
\xi(x,y) &= \alpha\left(\frac{(1/16)x^4 - 32x}{x^3 + 64}, \frac{(1/64)x^6y + 20x^3y - 512y}{x^6 + 128x^3 + 4096}\right) \\
         &= \frac{(1/128)x^6y + (1/2)x^6 + 10x^3y + 64x^3 - 256y + 2048}{x^6 + 128x^3 +4096}
\end{align*}

Finally, since $r=6$, the map $\beta \colon E(\Gamma) \to X(\Gamma)$ has $\beta(x,y)=y$.  So, we wish to rewrite $\xi$ in terms of only $y$. Since points on $E(\Gamma)$ satisfy $x^3 = y^2 -64$, we may replace each instance of $x^3$ in $\xi$ with $y^2 -64$; we obtain a rational function in $y$, which gives our final Belyi map
\[ \varphi(x) = \frac{(1/128)x^4 + (1/2)x^3 + 9x^2 - 864}{x^3}\]
Let $N(x)$ and $D(x)$ be the numerator and denominator of $\varphi$ respectively. Note that the preimages under $\varphi$ of $0, \infty$, and $1$ respectively are the roots of  $N, D$, and $N - D$. To confirm the ramification of $\varphi$, we note that up to a constant multiple we have the factorizations
\begin{align*}
N(x) &= (x-8)(x+24)^3 \\
D(x) &= x^3 \\
N(x) - D(x) &= (x +8)(x-24)^3
\end{align*}
where the repeated factors confirm the ramification, and we note the direct correspondence between the powers of the factors and the cycle structure of $\sigma$.
\end{example}

\begin{example}
Given $\sigma \colonequals ((1\, 4)(2\, 5)(3\, 6),(1\, 3\, 5),(1\, 4\, 5\, 2\, 3\, 6))$, we determine that $X(\Gamma)$ has genus $0$ and the corresponding Belyi map $\varphi \colon X(\Gamma) \to \PoneC$ is given by
\[
\varphi(x) =\frac{ x^6 + 162x^5 + 7047x^4 + 43740x^3 + 413343x^2 + 1062882x + 4782969}{x^6 -54x^5 + 1215x^4 - 14580x^3 + 98415x^2 - 354294x + 531441}
\]
with numerator, denominator, and difference given by 
\begin{align*}
N(x) &= (x^3 + 81x^2 + 243x + 2187)^2\\
D(x) &= (x-9)^6 \\
N(x) - D(x) &=  (x^2 + 27)(x+9)^3    
\end{align*}
\end{example}

\begin{example}
Given the triple $\sigma \colonequals ((1\, 9)(2\, 8)(3\, 7)(4\, 6), (1\, 6)(2\, 9\, 10\, 3)(4\, 5\, 8\, 7),\\ (1\, 2\, 5\, 4)(3\, 8)(6\, 7\, 10\, 9))$ we obtain
\[ 
\varphi(x) =\frac{\splitfrac{1/625x^{10} + 1/125(8i + 44)x^8 + 1/25(264i + 702)x^6} {+1/5(2872i + 4796)x^4 + (10296i + 11753)x^2}}{\splitfrac{x^8 +1/5(152i - 164)x^6 + 1/25(-18696i + 1422)x^4} {+1/125(547048i + 434764)x^2 + 1/625(-1476984i - 9653287)}}
\]
with numerator, denominator, and difference respectively given by 
\begin{align*}
&x^2 (x^2 + 10i + 55)^4 \\
&(x^2 + 1/5(38i - 41))^4\\
&(x - 4i + 3)(x + 4i - 3)(x^2 + (-2i + 14)x - 24i - 7)^2 (x^2 + (2i - 14)x - 24i - 7)^2
\end{align*}
where $i^2=-1$.
\end{example}

\begin{remark}
Unfortunately, our algorithms do not automatically descend the Belyi map to a minimal field of definition (if such a field exists).  For example, for the permutation triple $\sigma \colonequals ((1\,4),(1\,2\,6)(3\,4\,5),(1\,6\,2\,4\,3\,5)$ we find the map
\[ \varphi(x) = 36(\zeta_6-1)\frac{(x-2)(x-2\zeta-1)^2(x^2+2x-11)}{(x+2z-3)^6} \]
defined over $\Q(\zeta_6)$; however, it can be shown that the Belyi map descends to $\Q$, given more simply by $\varphi(x) = 9(3x^6-3x^4+x^2)$.  We refer to Sijsling--Voight \cite[\S 6]{SijslingVoight} and Musty--Schiavone--Sijsling--Voight \cite[\S 4]{MSSV} for further discussion.
\end{remark}

\end{document}